\providecommand{\U}[1]{\protect\rule{.1in}{.1in}}
\newtheorem{teo}{Theorem}[section]
\newtheorem{prop}[teo]{Proposition}
\newtheorem{cor}[teo]{Corollary}
\newtheorem{ex}[teo]{Example}
\newtheorem{obs}[teo]{Remark}
\newtheorem{lema}[teo]{Lemma}
\newtheorem{final remark}[teo]{Final Remark}
\newtheorem{definition}[teo]{Definition}
\newcommand{\an}{\left \Vert} 
\newcommand{\fn}{\right \Vert} 
\newcommand{\ach}{\left \{} 
\newcommand{\fch}{\right \}} 
\newcommand{\ap}{\left (} 
\newcommand{\fp}{\right )} 
\begin{document}

\title{\sc Hyper-ideals of multilinear operators}
\date{}
\author{Geraldo Botelho\thanks{Supported by CNPq Grant
305958/2014-3 and Fapemig Grant PPM-00326-13.\hfill\newline2010 Mathematics Subject
Classification: 47L22, 46G25, 47L20, 47B10, 46B28, 46A32. \newline Keywords: Banach spaces, multilinear operators, operator ideals, multi-ideals, hyper-ideals.}  ~and Ewerton R. Torres}\maketitle

\begin{abstract} We introduce and develop the notion of hyper-ideals of multilinear operators between Banach spaces. While the well studied notion of ideals of multilinear operators (multi-ideals) relies on the composition with linear operators, the notion we propose, by considering the composition with multilinear operators, explores more deeply the nonlinear feature of the subject. The results we prove show that, although more restrictive {\it a priori}, hyper-ideals enjoy nice general properties and its theory is rich enough to provide distinguished examples in every situation where the corresponding multi-ideal fails to be a hyper-ideal.
\end{abstract}

\section{Introduction and background}
The theory of ideals of multilinear operators between Banach spaces (multi-ideals) was initiated by A. Pietsch \cite{pietsch} as a first step to take to the nonlinear setting the successful theory of ideals of linear operators (operator ideals). Since then much research has been done in this subject, we mention just a few recent developments: \cite{achour, aronrueda3, aronrueda2, erhan1, erhan2, pablo, jamilson, carando2, carandoracsam, defant, joilson, popan, popaarchiv, popass, diana}.

In this paper we introduce a refinement of the concept of multi-ideals, which we call {\it hyper-ideals}, with the purpose of exploring the stability of the class with respect to the composition with multilinear operators, rather than with respect to the composition with linear operators as in the case of multi-ideals. To be more precise, let us recall the defining property of multi-ideals: a class $\cal M$ of multilinear operators enjoys the multi-ideal property if, in the following diagram,
\begin{displaymath}
\xymatrix@C=-3pt{
G_1 \ar[d]^{u_1}& \times & G_2 \ar[d]^{u_2}& \times &\cdots &\times& G_n\ar[d]^{u_n}\ar[drrrrrrrrrrrrrrrrrrrrrrrrrrrrrrrrrrrr]^{~~t\circ A\circ(u_1,u_2,\ldots,u_n)} &&&&&&&&&&&&&&&&&& & \\
E_1 & \times & E_2 & \times & \cdots &\times& E_n \ar[rrrrrrrrrrrrrrrrrr]^{A}&&&&&&&&&&&&&&&&&&  F \ar[rrrrrrrrrrrrrrrrrr]^{t~~~} &&&&&&&&&&&&&&&&&& H}
\end{displaymath}
$u_1, u_2, \ldots, u_n,t$ are continuous linear operators and $A$ is an $n$-linear operator belonging to $\cal M$, then the composition  $t\circ A\circ(u_1,\ldots,u_n)$ belongs to $\cal M$ as well. The following question is quite natural: once we are in the multilinear setting, why not considering the composition of $A$ with {\it multilinear} operators on the left-hand side? This leads us to the consideration of classes $\cal H$ of multilinear operators enjoying the {\it hyper-ideal property}: if, in the following diagram,
{\footnotesize\begin{displaymath}
\xymatrix@C=-3pt{
(G_1\times\cdots\times G_{m_1}) \ar[d]^{B_1}& \times & (G_{m_1+1}\times\cdots\times G_{m_2}) \ar[d]^{B_2}& \times &\cdots &\times& (G_{m_{n-1}+1}\times\cdots\times G_{m_n})\ar[d]^{B_n}\ar[drrrrrrrrrrrrrrrrrrrrrrrrrrrrrrrrrrr]^{~~~~~~~t\circ A\circ(B_1,\ldots,B_n)\in\mathcal{H}} &&&&&&&&&&&&&&&&&& & \\
E_1 & \times & E_2 & \times & \cdots &\times& E_n \ar[rrrrrrrrrrrrrrrrrr]^{A\in\mathcal{H}}&&&&&&&&&&&&&&&&&&  F \ar[rrrrrrrrrrrrrrrrr]^{t~~~~~~~}& &&&&&&&&&&&&&&&& H}
\end{displaymath}}
$B_1, \ldots, B_n$ are {\it multilinear} operators, $t$ is a linear operator and $A$ is an $n$-linear operator belonging to $\cal H$, then the composition  $t\circ A\circ(B_1,\ldots,B_n)$ belongs to $\cal H$ as well.

The hyper-ideal property has already been studied individually for some classes of multilinear operators, see, e.g., \cite{defant, popan, popass}. The purpose of this paper is to systematize the study of the classes satisfying this property, which we call {\it hyper-ideals}. In Section 1 we develop the basics of the theory of hyper-ideals. Sections 2 and 3 have several purposes: (i) Many illustrative examples of hyper-ideals are provided, including classical classes such as compact and weakly compact multilinear operators. (ii) Of course every hyper-ideal is a multi-ideal; some important multi-ideals are shown not to be hyper-ideals. (iii) Deep distinctions with the theory of multi-ideals are established, for instance Corollary \ref{difmulti}. (iv) Whenever an important multi-ideal fails to be a hyper-ideal, we construct/identify a hyper-ideal that enjoys the properties, in the hyper-ideal context, of the missing multi-ideal. For example, as soon as we establish that the class of nuclear multilinear operators, which is smallest Banach multi-ideal, fails to be a hyper-ideal, we construct the class of hyper-nuclear operators, which shall be proved to be the smallest Banach hyper-ideal. This and other analogous situations show that the theory of hyper-ideals is rich and independent from the theory of multi-ideals.

From now on, $E, F, G, H, E_n, G_n, n \in \mathbb{N}$, shall denote Banach spaces over $\mathbb{K} = \mathbb{R}$ or $\mathbb{C}$. The symbols $E'$ stands for the topological dual of $E$ and $B_E$ for its closed unit ball. By ${\cal L}(E_1, \ldots, E_n;F)$ we denote the Banach space of continuous $n$-linear operators from $E_1 \times \cdots \times E_n$ to $F$ endowed with the usual uniform norm $\|\cdot\|$. In the linear case we write ${\cal L}(E;F)$. If $E_1  = \cdots = E_n$ we write ${\cal L}(^nE;F)$. If $F = \mathbb{K}$ we write ${\cal L}(E_1, \ldots, E_n)$ and ${\cal L}(^nE)$. Given $\varphi_1 \in E_1', \ldots, \varphi_n \in E_n'$ and $y \in F$, by $\varphi_1 \otimes \cdots \otimes \varphi_n \otimes y$ we mean the $n$-linear operator defined by
$$\varphi_1 \otimes \cdots \otimes \varphi_n \otimes y(x_1, \ldots, x_n) = \varphi_1(x_1) \cdots \varphi_n(x_n)b. $$
Linear combinations of such operators are called {\it $n$-linear operators of finite type}. A linear space-valued map is said to be of {\it finite rank} if its range generates a finite dimensional subspace of the target space. For the general theory of multilinear operators, see \cite{dineen, mujica}.

Given $0 < p \leq 1$, by $\ell_p(E)$ we denote the $p$-Banach (Banach if $p=1$) space of absolutely $p$-summable $E$-valued sequences endowed with its usual norm $\|\cdot\|_p$, and by $\ell_p^w(E)$ the $p$-Banach (Banach if $p=1$) space of weakly $p$-summable $E$-valued sequences with its usual norm $\|\cdot\|_{w,p}$ (see, e.g, \cite{diestel}).

Given a class $\cal H$ of multilinear operators between Banach spaces, by ${\cal H}^1$ we mean its linear component, that is, for all Banach spaces $E$ and $F$, ${\cal H}^1(E;F) := {\cal L}(E;F) \cap {\cal H}. $

A $p$-normed multi-ideal is a class $\cal M$ of multilinear operators endowed with a map $\|\cdot\|_{\mathcal{M}} \colon \mathcal{M} \longrightarrow [0,\infty)$ such that:\\
$\bullet$ For all $n, E_1, \ldots, E_n,F$, $(\mathcal{M}(E_1,\ldots, E_n;F), \|\cdot\|_{\mathcal{M}})$ is a $p$-normed linear subspace of $\mathcal{L}(E_1,\ldots, E_n;F)$ containing the $n$-linear operators of finite type;\\
$\bullet$  $\|I_n \colon \mathbb{K}^n\longrightarrow \mathbb{K}, I_n(\lambda_1,\ldots,\lambda_n)=\lambda_1\cdots\lambda_n\|_{\cal M} =1$ for every $n$;\\
$\bullet$ The multi-ideal property: If $A \in \mathcal{M}(E_1,\ldots, E_n;F)$, $u_1\in \mathcal{L}(G_{1};E_1)$, $\ldots$, $u_n\in \mathcal{L}(G_n;E_n)$ and $t \in \mathcal{L}(F;H)$, then  $t\circ A\circ(u_1,\ldots,u_n) \in \mathcal{M}(G_1,\ldots, G_n;H)$ and
$$\|t\circ A\circ(u_1,\ldots,u_n)\|_{\mathcal{H}}\le\|t\|\cdot\|A\|_{\mathcal{H}}\cdot
\|u_1\|\cdots\|u_n\|$$
(see the first diagram in this Introduction). The notions of normed, Banach and $p$-Banach multi-ideals are defined in the obvious way.

\section{Definition and basic properties}\label{idtop}

According to the philosophy described in the Introduction, we start the study of classes of multilinear operators that are stable with respect to the composition with multilinear operators on the left-hand side:

\begin{definition}\label{dhi}\rm A \textit{hyper-ideal of multilinear operators}, or simply a \textit{hyper-ideal}, is a subclass $\mathcal{H}$ of the class of all continuous multilinear operators between Banach spaces such that for all $n\in \mathbb{N}$ and Banach spaces $E_1, \ldots, E_n$ and $F$, the components $$\mathcal{H}(E_1,\ldots, E_n;F):=\mathcal{L}(E_1,\ldots, E_n;F)\cap \mathcal{H}$$ satisfy:\\
$(1)$ $\mathcal{H}(E_1,\ldots, E_n;F)$ is a linear subspace of $\mathcal{L}(E_1,\ldots, E_n;F)$ which contains the $n$-linear operators of finite type;\\
$(2)$ The hyper-ideal property: Given natural numbers $n$ and $1\le m_1<\cdots<m_n$, and  Banach spaces $G_1,\ldots,G_{m_n}$, $E_1,\ldots,E_n$, $F$ and $H$, if  $B_1\in \mathcal{L}(G_1,\ldots, G_{m_1};E_1), \ldots, B_n\in \mathcal{L}(G_{m_{n-1}+1},\ldots, G_{m_n};E_n)$, $t \in \mathcal{L}(F;H)$ and
$A \in \mathcal{H}(E_1,\ldots, E_n;F)$, then $t\circ A\circ(B_1,\ldots,B_n)$ belongs to $\mathcal{H}(G_1,\ldots, G_{m_n};H)$ (see the second diagram in the Introduction).

If there exist $p\in (0,1]$ and a map $\|\cdot\|_{\mathcal{H}} \colon \mathcal{H} \longrightarrow [0,\infty)$ such that:\\
(a) $\|\cdot\|_{\mathcal{H}}$ restricted to any component $\mathcal{H}(E_1,\ldots, E_n;F)$ is a $p$-norm;\\
(b) $\|I_n \colon \mathbb{K}^n\longrightarrow \mathbb{K}, I_n(\lambda_1,\ldots,\lambda_n)=\lambda_1\cdots\lambda_n\|_{\cal H} =1$ for every $n$;\\
(c) The hyper-ideal inequality: If $A \in \mathcal{H}(E_1,\ldots, E_n;F)$, $B_1\in \mathcal{L}(G_{1},\ldots, G_{m_1};E_1)$, $\ldots$, $B_n\in \mathcal{L}(G_{m_{n-1}+1},\ldots, G_{m_n};E_n)$ and $t \in \mathcal{L}(F;H)$, then
\begin{equation}\|t\circ A\circ(B_1,\ldots,B_n)\|_{\mathcal{H}}\le\|t\|\cdot\|A\|_{\mathcal{H}}\cdot
\|B_1\|\cdots\|B_n\|,\label{eqhi}\end{equation}
then $(\mathcal{H},\|\cdot\|_{\mathcal{H}})$ is called a \textit{$p$-normed hyper-ideal}. If all components $\mathcal{H}(E_1,\ldots, E_n;F)$ are complete spaces with respect to the topology generated by $\|\cdot\|_{\mathcal{H}}$, then  $(\mathcal{H},\|\cdot\|_{\mathcal{H}})$ is called a \textit{$p$-Banach hyper-ideal}. When $p=1$ we say that $(\mathcal{H},\|\cdot\|_{\mathcal{H}})$ is a \textit{normed hyper-ideal} or a \textit{Banach hyper-ideal}. If  $(\mathcal{H},\|\cdot\|_{\mathcal{H}})$ is a $p$-normed ($p$-Banach) hyper-ideal for some $p \in (0,1]$, then we say that it is a {\it quasi-normed hyper-ideal} ({\it quasi-Banach hyper-ideal}).\end{definition}

\begin{obs}\label{obshmi}\rm (i) It is plain that that every (normed, quasi-normed, Banach, quasi-Banach) hyper-ideal is a (normed, quasi-normed, Banach, quasi-Banach) multi-ideal. So, properties of multi-ideals are inherited by hyper ideals. For instance, if  $(\mathcal{H},\|\cdot\|_\mathcal{H})$ a $p$-normed hyper-ideal, then \begin{equation}\|\cdot\|\le\|\cdot\|_\mathcal{H}.\label{deshi}\end{equation}
 (ii) Any hyper-ideal is a normed hyper-ideal with the uniform norm $\|\cdot\|$. In the case it is a Banach hyper-ideal, we say that it is a {\it closed hyper-ideal}.\end{obs}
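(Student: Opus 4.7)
Part (i) naturally splits in two. The first assertion---that every (resp.\ $p$-normed, $p$-Banach) hyper-ideal is a (resp.\ $p$-normed, $p$-Banach) multi-ideal---is obtained by specializing the hyper-ideal property of Definition \ref{dhi} to the indices $m_j = j$ for $j = 1, \ldots, n$. Then each ``block'' $G_{m_{j-1}+1}, \ldots, G_{m_j}$ collapses to a single space $G_j$, each $B_j \in \mathcal{L}(G_j; E_j)$ becomes a linear operator, and the second diagram in the Introduction reduces to the first. Hence condition (2) of the definition yields the multi-ideal property, and inequality \eqref{eqhi} becomes the multi-ideal inequality with the same constant.

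For the norm inequality $\|\cdot\|\le \|\cdot\|_\mathcal{H}$ in \eqref{deshi}, fix $A \in \mathcal{H}(E_1,\ldots,E_n;F)$, vectors $x_i \in E_i$ and a functional $\varphi \in F'$. Define $u_i \in \mathcal{L}(\mathbb{K};E_i)$ by $u_i(\lambda) = \lambda x_i$, so that $\|u_i\| = \|x_i\|$. A direct computation gives
$$\varphi \circ A \circ (u_1,\ldots,u_n) = \varphi\bigl(A(x_1,\ldots,x_n)\bigr)\cdot I_n.$$
Invoking the multi-ideal inequality (now at our disposal thanks to the first part of (i)), the normalization $\|I_n\|_\mathcal{H} = 1$, and taking the supremum over $x_i \in B_{E_i}$ and $\varphi \in B_{F'}$ deliver $\|A(x_1,\ldots,x_n)\| \le \|A\|_\mathcal{H}\|x_1\|\cdots \|x_n\|$ and hence $\|A\| \le \|A\|_\mathcal{H}$.

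Part (ii) is a direct verification of axioms (a), (b), (c) in Definition \ref{dhi} for $\|\cdot\|_\mathcal{H} := \|\cdot\|$ and $p=1$. Each component $\mathcal{H}(E_1,\ldots,E_n;F)$ is a normed linear space under $\|\cdot\|$ since it is a subspace of $\mathcal{L}(E_1,\ldots,E_n;F)$, and the normalization $\|I_n\| = 1$ is immediate. The hyper-ideal inequality for $\|\cdot\|$ is the classical estimate
$$\|t \circ A \circ (B_1,\ldots,B_n)\| \le \|t\|\cdot \|A\|\cdot \|B_1\|\cdots \|B_n\|,$$
obtained by evaluating at points in the unit balls of the $G_k$'s and applying, in sequence, the bounds on $B_1,\ldots,B_n$, then $A$, then $t$.

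There is no serious obstacle in either part; the whole statement is a direct unpacking of Definition \ref{dhi}. The only step that deserves a moment of attention is the identification $\varphi \circ A \circ (u_1,\ldots,u_n) = \varphi(A(x_1,\ldots,x_n)) \cdot I_n$ used to reduce the proof of \eqref{deshi} to the normalization of the product form $I_n$; this is the standard device by which the $I_n$-normalization forces $\|\cdot\|_\mathcal{H}$ to dominate $\|\cdot\|$ in the linear-operator-ideal theory, and it transfers verbatim to the hyper-ideal setting.
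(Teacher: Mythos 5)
Your proposal is correct and is precisely the standard argument the paper has in mind: the authors give no proof, declaring the reduction to multi-ideals ``plain'' and explicitly omitting the proof of \eqref{deshi} as ``very similar to the case of operator ideals,'' and your specialization to $m_j=j$ together with the identity $\varphi\circ A\circ(u_1,\ldots,u_n)=\varphi(A(x_1,\ldots,x_n))\cdot I_n$ is exactly that classical device. Nothing is missing.
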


This section is devoted to the establishment of general proprieties of hyper-ideals, which are compared, case by case, with the corresponding property of multi-ideals. Illustrative examples are postponed to the next section.

As in the cases of operator ideals and multi-ideals, standard arguments give the:

\begin{prop}\label{fhi} Given a hyper-ideal $\mathcal{H}$, define
$$\overline{\mathcal{H}}(E_1,\ldots,E_n;F):=\overline{\mathcal{H}(E_1,\ldots,E_n;F)}^{\|\cdot\|},$$ for all $n\in \mathbb{N}$ and Banach spaces $E_1,\ldots,E_n,F$.
Then $(\overline{\mathcal{H}},\|\cdot\|)$ is the smallest closed hyper-ideal containing $\mathcal{H}$.\end{prop}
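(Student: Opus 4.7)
The plan is to verify the three assertions in turn: that $\overline{\mathcal{H}}$ is a hyper-ideal, that it is closed with respect to the uniform norm $\|\cdot\|$, and that it is minimal among closed hyper-ideals containing $\mathcal{H}$. The arguments are standard closure arguments relying on the continuity of the operations involved in the hyper-ideal property; the one step that genuinely uses the hyper-ideal structure (as opposed to, say, the multi-ideal structure) is showing that the hyper-ideal property lifts to the closure, and this is where I would focus attention.

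First I would check that each component $\overline{\mathcal{H}}(E_1,\ldots,E_n;F)$ is a linear subspace of $\mathcal{L}(E_1,\ldots,E_n;F)$ containing the $n$-linear operators of finite type. Linearity of the closure of a linear subspace in a topological vector space is immediate, and the finite type operators already lie in $\mathcal{H}(E_1,\ldots,E_n;F)$, hence in its closure.

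The core of the argument is the hyper-ideal property. Fix $A \in \overline{\mathcal{H}}(E_1,\ldots,E_n;F)$, multilinear operators $B_1,\ldots,B_n$ as in Definition \ref{dhi}(2), and a continuous linear operator $t \in \mathcal{L}(F;H)$. By definition there is a sequence $(A_k) \subset \mathcal{H}(E_1,\ldots,E_n;F)$ with $\|A_k - A\| \to 0$. The hyper-ideal property of $\mathcal{H}$ gives $t \circ A_k \circ (B_1,\ldots,B_n) \in \mathcal{H}(G_1,\ldots,G_{m_n};H)$ for every $k$, and the elementary estimate
\[
\|t \circ A_k \circ (B_1,\ldots,B_n) - t \circ A \circ (B_1,\ldots,B_n)\| \le \|t\| \cdot \|A_k - A\| \cdot \|B_1\| \cdots \|B_n\|
\]
shows that this sequence converges in the uniform norm to $t \circ A \circ (B_1,\ldots,B_n)$, which therefore belongs to $\overline{\mathcal{H}}(G_1,\ldots,G_{m_n};H)$. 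This establishes that $\overline{\mathcal{H}}$ is a hyper-ideal. It is closed in the uniform norm by construction (the closure of a set in a metric space is closed), and since each $\mathcal{L}(E_1,\ldots,E_n;F)$ is a Banach space under $\|\cdot\|$, each closed subspace is complete; hence $(\overline{\mathcal{H}},\|\cdot\|)$ is a closed hyper-ideal by Remark \ref{obshmi}(ii).

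Finally, for minimality, let $\mathcal{I}$ be any closed hyper-ideal with $\mathcal{H} \subseteq \mathcal{I}$. Then for all $n, E_1,\ldots,E_n,F$ one has $\mathcal{H}(E_1,\ldots,E_n;F) \subseteq \mathcal{I}(E_1,\ldots,E_n;F)$, and taking uniform-norm closures of both sides yields $\overline{\mathcal{H}}(E_1,\ldots,E_n;F) \subseteq \mathcal{I}(E_1,\ldots,E_n;F)$, since $\mathcal{I}(E_1,\ldots,E_n;F)$ is already $\|\cdot\|$-closed. Thus $\overline{\mathcal{H}} \subseteq \mathcal{I}$, completing the proof. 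The only real subtlety is making sure the sequence-based verification of the hyper-ideal property works uniformly in the arities $n$ and $m_1,\ldots,m_n$; the displayed estimate above does this at one stroke.
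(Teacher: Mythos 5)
Your argument is correct and is precisely the standard closure argument the paper alludes to when it omits the proof of Proposition \ref{fhi} as being ``very similar to the case of operator ideals'': linearity and the finite-type operators pass to the closure trivially, the hyper-ideal property lifts via the estimate $\|t\circ(A_k-A)\circ(B_1,\ldots,B_n)\|\le\|t\|\cdot\|A_k-A\|\cdot\|B_1\|\cdots\|B_n\|$, completeness follows from closedness in the Banach space $\mathcal{L}(E_1,\ldots,E_n;F)$ together with Remark \ref{obshmi}(ii), and minimality follows by taking closures inside any closed hyper-ideal containing $\mathcal{H}$. Nothing is missing.
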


If $({\cal M}, \|\cdot\|_{\cal M})$ is a quasi-normed multi-ideal and $\varphi_1 \in E_1', \ldots, \varphi_n \in E_n', y \in F$, then $\varphi_1 \otimes \cdots \varphi_n \otimes y$ belongs to $\cal M$ and $\|\varphi_1 \otimes \cdots \varphi_n \otimes y\|_{\cal M}= \|\varphi_1 \otimes \cdots \varphi_n \otimes y\|$. Hyper-ideals enjoy a stronger property, which shall play an important role later (cf. Theorem \ref{hnmb}):

\begin{prop}\label{ndeshi}Let $(\mathcal{H},\|\cdot\|_\mathcal{H})$ be a quasi-normed hyper-ideal, $n$, $m_1<\cdots<m_n$ be natural numbers, $T_1\in\mathcal{L}(E_1,\ldots,E_{m_1}),\ldots, T_n \in \mathcal{L}(E_{m_{n-1}+1},\ldots,E_{m_n})$ and $y\in F$. Considered the $m_n$-linear operator $T_1\otimes\cdots \otimes T_n\otimes y \colon E_1 \times \cdots \times E_{m_n} \longrightarrow F$ defined by
$$T_1\otimes\cdots \otimes T_n\otimes y(x_1,\ldots,x_{m_n})=T_1(x_1,\ldots,x_{m_1})\cdots T_l(x_{m_{n-1}+1},\ldots,x_{m_n})\cdot y.$$ Then $T_1\otimes\cdots \otimes T_n\otimes y\in\mathcal{H}(E_1,\ldots,E_{m_n};F)$ and
$$\|T_1\otimes\cdots \otimes T_n\otimes y\|_\mathcal{H}=\|T_1\otimes\cdots\otimes T_n\otimes y\|=\|T_1\|\cdots\|T_n\|\cdot\|y\|.$$\end{prop}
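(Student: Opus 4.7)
The plan is to factor $T_1\otimes\cdots\otimes T_n\otimes y$ through the canonical scalar-product operator $I_n$, and then use the hyper-ideal inequality together with the normalization condition $\|I_n\|_{\mathcal{H}}=1$. Specifically, let $t\colon \mathbb{K}\to F$ be the linear map $\lambda\mapsto \lambda y$, so that $\|t\|=\|y\|$. A direct unwinding of the definitions shows
\[
t\circ I_n\circ(T_1,\ldots,T_n)(x_1,\ldots,x_{m_n}) = T_1(x_1,\ldots,x_{m_1})\cdots T_n(x_{m_{n-1}+1},\ldots,x_{m_n})\cdot y,
\]
i.e.\ $T_1\otimes\cdots\otimes T_n\otimes y = t\circ I_n\circ(T_1,\ldots,T_n)$.

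Since $I_n\in\mathcal{H}(^n\mathbb{K};\mathbb{K})$ by condition (b) and the $T_i$ are continuous multilinear operators, the hyper-ideal property gives membership $T_1\otimes\cdots\otimes T_n\otimes y\in\mathcal{H}(E_1,\ldots,E_{m_n};F)$. Applying the hyper-ideal inequality (c) to this factorization, together with $\|I_n\|_{\mathcal{H}}=1$, yields
\[
\|T_1\otimes\cdots\otimes T_n\otimes y\|_{\mathcal{H}} \le \|t\|\cdot\|I_n\|_{\mathcal{H}}\cdot\|T_1\|\cdots\|T_n\| = \|y\|\cdot\|T_1\|\cdots\|T_n\|.
\]

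For the reverse inequality, I would first verify the routine fact that the uniform norm of $T_1\otimes\cdots\otimes T_n\otimes y$ equals $\|T_1\|\cdots\|T_n\|\cdot\|y\|$; this follows from taking the supremum over products of unit balls, since the arguments of each $T_i$ are independent groups of variables and a product of scalar suprema can be realised simultaneously. Combining this with the inequality $\|\cdot\|\le\|\cdot\|_{\mathcal{H}}$ from (\ref{deshi}) gives
\[
\|T_1\|\cdots\|T_n\|\cdot\|y\| = \|T_1\otimes\cdots\otimes T_n\otimes y\| \le \|T_1\otimes\cdots\otimes T_n\otimes y\|_{\mathcal{H}},
\]
and the chain of inequalities collapses to the desired equalities.

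There is no real obstacle here; the only point requiring care is the bookkeeping in the factorization $T_1\otimes\cdots\otimes T_n\otimes y = t\circ I_n\circ(T_1,\ldots,T_n)$, where one must correctly group the variables $(x_{m_{i-1}+1},\ldots,x_{m_i})$ as inputs to $T_i$ so that the hyper-ideal property applies with the chosen indices $m_1<\cdots<m_n$. Once this factorization is in place, the statement is an immediate consequence of conditions (b), (c) and the inherited inequality (\ref{deshi}).
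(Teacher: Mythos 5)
Your proof is correct and follows essentially the same route as the paper: the authors also write $T_1\otimes\cdots\otimes T_n\otimes y=(1\otimes y)\circ I_n\circ(T_1,\ldots,T_n)$, get membership from the hyper-ideal property, and sandwich the norm between the hyper-ideal inequality and the inequality $\|\cdot\|\le\|\cdot\|_{\mathcal{H}}$ together with the elementary identity $\|T_1\otimes\cdots\otimes T_n\otimes y\|=\|T_1\|\cdots\|T_n\|\cdot\|y\|$. Your bookkeeping (using $I_n$ acting on the $n$ grouped blocks of variables) is exactly what is needed for the hyper-ideal property to apply.
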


\begin{proof} Considering the linear operator $1\otimes y \colon \mathbb{K}\longrightarrow F$ given by $1\otimes y(\lambda)=\lambda\cdot y$, we have  $$(1\otimes y)\circ I_{m_n}\circ (T_1,\ldots,T_n)=T_1\otimes\cdots\otimes T_n\otimes y.$$
As $I_{m_n}\in\mathcal{H}(^{m_n}\mathbb{K};\mathbb{K})$, from the hyper-ideal property of $\cal H$ we conclude that $T_1\otimes\cdots\otimes T_n\otimes y\in\mathcal{H}(E_1,\ldots,E_{m_n};F)$. Using first (\ref{deshi}) and then (\ref{eqhi}), we get
\begin{eqnarray*}\|T_1\otimes\cdots\otimes T_n\otimes y\|&\le& \|T_1\otimes\cdots\otimes T_n\otimes y\|_\mathcal{H}\\&=&\|(1\otimes y)\circ I_{m_n}\circ(T_1,\ldots,T_n)\|_\mathcal{H}\\&\le& \|(1\otimes y)\|\cdot\| I_{m_n}\|_\mathcal{H}\cdot\|T_1\|\cdots\|T_n\|\\
&=&\|T_1\|\cdots\|T_n\|\cdot\|y\|=\|T_1\otimes\cdots\otimes T_n\otimes y\|,\end{eqnarray*}
from which the desired equality follows.\end{proof}

The series criterion for operator ideals (\cite[9.4]{klauslivro}, \cite[6.2.3]{pietschlivro}) and for multi-ideals \cite[Satz 2.2.4]{andreas} works, {\it mutatis mutandis}, for hyper-ideals:

\begin{teo}[Series criterion]\label{cs} Let $0<p\le 1$  and $\mathcal{H}$ be a subclass of the class of all continuous multilinear operators between Banach spaces endowed with a map $\|\cdot\|_\mathcal{H}\colon\mathcal{H}\longrightarrow [0,+\infty)$. Then $(\mathcal{H},\|\cdot\|_\mathcal{H})$ is a $p$-Banach hyper-ideal if and only if the following conditions are satisfied:\\
{\rm (i)} $I_n\in\mathcal{H}(\mathbb{K}^n;\mathbb{K})$ and $\|I_n\|_{\mathcal{H}}=1$ for every $n \in \mathbb{N}$;\\
{\rm (ii)} If $(A_j)_{j=1}^\infty\subseteq\mathcal{H}(E_1,\ldots,E_n;F)$ is such that $\sum\limits_{j=1}^\infty\|A_j\|_\mathcal{H}^{p}<\infty$, then $$A:=\sum\limits_{j=1}^\infty A_j\in\mathcal{H}(E_1,\ldots,E_n;F)\ \mbox{and}\ \|A\|_\mathcal{H}^{p}\le\sum\limits_{j=1}^\infty \|A_j\|_\mathcal{H}^{p};$$
{\rm (iii)} If $n \in \mathbb{N}$, $1\le m_1<\cdots<m_n$, $G_1,\ldots,G_{m_n}$, $E_1,\ldots,E_n, F, H$ are Banach spaces, $B_1\in \mathcal{L}(G_1,\ldots, G_{m_1};E_1), \ldots, B_n\in \mathcal{L}(G_{m_{n-1}+1},\ldots, G_{m_n};E_n)$, $A \in \mathcal{H}(E_1,\ldots, E_n;F)$ and $t \in \mathcal{L}(F;H)$, then
$t\circ A\circ(B_1,\ldots,B_n)\in \mathcal{H}(G_1,\ldots, G_{m_n};H)$ and $$\|t\circ A\circ(B_1,\ldots,B_n)\|_{\mathcal{H}}\le\|t\|\cdot\|A\|_{\mathcal{H}}\cdot\|B_1\|\cdots\|B_n\|.$$\end{teo}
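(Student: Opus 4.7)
The plan is to prove the two implications separately, modeling the argument on the linear operator ideal and multi-ideal versions, but being careful about the places where hyper-composition (rather than linear composition) is required.

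For the direct implication, assuming $(\mathcal{H},\|\cdot\|_\mathcal{H})$ is a $p$-Banach hyper-ideal, conditions (i) and (iii) are immediate from Definition \ref{dhi}. For (ii), given $\sum_j\|A_j\|_\mathcal{H}^p<\infty$, I would observe that the partial sums $S_N=\sum_{j=1}^N A_j$ form a $p$-Cauchy sequence in $\mathcal{H}(E_1,\ldots,E_n;F)$, hence converge in the $p$-norm to some $A$ in that component by completeness. Since $\|\cdot\|\le\|\cdot\|_\mathcal{H}$ (Remark \ref{obshmi}(i)), this is also convergence in the uniform norm, so $A=\sum_j A_j$ pointwise; the norm inequality then follows by taking $N\to\infty$ in $\|S_N\|_\mathcal{H}^p\le\sum_{j=1}^N\|A_j\|_\mathcal{H}^p$ and using continuity of the $p$-norm.

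For the converse, assume (i), (ii), (iii); I must recover everything in Definition \ref{dhi}. Applying (ii) to the sequence $(A,B,0,0,\ldots)$ gives that each component is closed under addition with $\|A+B\|_\mathcal{H}^p\le\|A\|_\mathcal{H}^p+\|B\|_\mathcal{H}^p$. Homogeneity $\|\lambda A\|_\mathcal{H}=|\lambda|\|A\|_\mathcal{H}$ follows from (iii) applied to $\lambda A=(\lambda\,\mathrm{id}_F)\circ A\circ(\mathrm{id},\ldots,\mathrm{id})$ and its inverse (with $\lambda=0$ handled by $0=0\cdot\mathrm{id}_F\circ A\circ(\ldots)$). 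That finite-type operators lie in $\mathcal{H}$ follows from (i), (iii) and the factorization
$$\varphi_1\otimes\cdots\otimes\varphi_n\otimes y=(1\otimes y)\circ I_n\circ(\varphi_1,\ldots,\varphi_n),$$
together with the additivity just proved.

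The key step, on which definiteness of the $p$-norm rests, is to show $\|A\|\le\|A\|_\mathcal{H}$ for every $A\in\mathcal{H}(E_1,\ldots,E_n;F)$. For this I would pick $x_i\in B_{E_i}$ and, via Hahn-Banach, $\psi\in B_{F'}$ with $\psi(A(x_1,\ldots,x_n))=\|A(x_1,\ldots,x_n)\|$; then the linear maps $u_i(\lambda)=\lambda x_i$ (of norm $\le 1$) satisfy
$$\psi\circ A\circ(u_1,\ldots,u_n)=\|A(x_1,\ldots,x_n)\|\cdot I_n,$$
so (iii) combined with $\|I_n\|_\mathcal{H}=1$ yields $\|A(x_1,\ldots,x_n)\|\le\|A\|_\mathcal{H}$, and taking the supremum over the $x_i$ gives the claim. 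This is the step I expect to be the main conceptual obstacle, since it is the one that genuinely uses the unit-norm-of-$I_n$ axiom together with the hyper-ideal inequality.

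It remains to verify completeness of each component in the $p$-norm. The standard trick applies: given a $p$-Cauchy sequence $(A_k)$, extract a subsequence $(A_{k_j})$ with $\|A_{k_{j+1}}-A_{k_j}\|_\mathcal{H}^p<2^{-j}$, apply (ii) to the telescoping series to produce a limit $A\in\mathcal{H}(E_1,\ldots,E_n;F)$ of the subsequence, and conclude from the Cauchy hypothesis that the whole sequence converges to $A$. Together with (i) for axiom (b) and (iii) for axiom (c) of Definition \ref{dhi}, this completes the proof.
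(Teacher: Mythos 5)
The paper deliberately omits the proof of Theorem \ref{cs}, saying only that the classical series-criterion arguments for operator ideals and multi-ideals carry over \emph{mutatis mutandis}; your proposal is precisely that standard argument correctly adapted to the hyper-ideal setting (in particular the factorization $\psi\circ A\circ(u_1,\ldots,u_n)=\|A(x_1,\ldots,x_n)\|\, I_n$ through rank-one linear maps $u_i(\lambda)=\lambda x_i$, which is the step that uses $\|I_n\|_{\mathcal H}=1$ to get $\|\cdot\|\le\|\cdot\|_{\mathcal H}$ and hence definiteness), and it is correct. The only point to order carefully in a full write-up is to derive $\|0\|_{\mathcal H}=0$ from (i) and (iii) \emph{before} applying (ii) to the sequence $(A,B,0,0,\ldots)$, so that the hypothesis $\sum_j\|A_j\|_{\mathcal H}^p<\infty$ is actually satisfied.
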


We are not aware of any easy-to-find reference where the basic properties of multi-ideals are proved in detail. Some of them can be found, in German, in the dissertations \cite{andreas, geiss}. For properties of hyper-ideals inherited from multi-ideals, when the argument is very similar to the case of operator ideals, such as (\ref{deshi}), Proposition \ref{fhi} and Theorem \ref{cs}, we omit the proofs. As to the next property, we think it is worth giving at least a sketch of the proof.

\begin{prop}\label{propdeshi}Let $0 < p \leq 1$ and $(\mathcal{G},\|\cdot\|_\mathcal{G})$ and $(\mathcal{H},\|\cdot\|_\mathcal{H})$ be $p$-Banach hyper-ideals such that $\mathcal{G}\subseteq\mathcal{H}$. Then, for every $n\in \mathbb{N}$ there is an constant $C_n$, depending only of $n$, such that $$\|A\|_\mathcal{H}\le C_n\|A\|_\mathcal{G},$$
for all Banach spaces $E_1,\ldots,E_n,F$ and $A\in\mathcal{G}(E_1,\ldots,E_n; F)$.\end{prop}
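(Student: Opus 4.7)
The plan is to argue in two stages: first use the closed graph theorem to get, for each tuple of spaces, some constant comparing the two norms, and then rule out the possibility that these constants blow up as the spaces vary, by assembling a single master operator out of a hypothetical bad sequence.

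Fix $n\in\mathbb{N}$ and Banach spaces $E_1,\ldots,E_n,F$. Both $(\mathcal{G}(E_1,\ldots,E_n;F),\|\cdot\|_{\mathcal{G}})$ and $(\mathcal{H}(E_1,\ldots,E_n;F),\|\cdot\|_{\mathcal{H}})$ are $p$-Banach, hence complete metrizable topological vector spaces, and by \eqref{deshi} convergence in either ideal norm implies convergence in the uniform norm. Therefore, the formal identity map between them has closed graph (coincident limits in the uniform norm force the two ideal-limits to agree), and the closed graph theorem for $F$-spaces yields a constant $C(E_1,\ldots,E_n;F)$ with $\|A\|_{\mathcal{H}}\le C(E_1,\ldots,E_n;F)\|A\|_{\mathcal{G}}$ on $\mathcal{G}(E_1,\ldots,E_n;F)$.

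Next I would show these constants can be uniformly bounded (by some $C_n$) as the spaces vary. Suppose not: then for each $k\in\mathbb{N}$ one can choose Banach spaces $E_1^k,\ldots,E_n^k,F^k$ and $A_k\in\mathcal{G}(E_1^k,\ldots,E_n^k;F^k)$ with $\|A_k\|_{\mathcal{G}}=1$ and $\|A_k\|_{\mathcal{H}}>2^{2k/p}$. Set
$$E_i:=\Bigl(\bigoplus_{k=1}^\infty E_i^k\Bigr)_{\ell_\infty},\qquad F:=\Bigl(\bigoplus_{k=1}^\infty F^k\Bigr)_{\ell_\infty},$$
and let $\pi_i^k\colon E_i\to E_i^k$, $P_k\colon F\to F^k$ be the canonical norm-one projections and $\hat\iota_i^k\colon E_i^k\to E_i$, $\iota_k\colon F^k\to F$ the canonical norm-one inclusions. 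Define $\tilde A_k:=\iota_k\circ A_k\circ(\pi_1^k,\ldots,\pi_n^k)$; the multi-ideal property of $\mathcal{G}$ (inherited from the hyper-ideal property) gives $\tilde A_k\in\mathcal{G}(E_1,\ldots,E_n;F)$ with $\|\tilde A_k\|_{\mathcal{G}}\le 1$. Since
$$\sum_{k=1}^\infty\bigl\|2^{-k/p}\tilde A_k\bigr\|_{\mathcal{G}}^{\,p}\le\sum_{k=1}^\infty 2^{-k}<\infty,$$
the series criterion (Theorem \ref{cs}) applied to $\mathcal{G}$ produces $A:=\sum_{k=1}^\infty 2^{-k/p}\tilde A_k\in\mathcal{G}(E_1,\ldots,E_n;F)$, and since $\mathcal{G}\subseteq\mathcal{H}$ also $A\in\mathcal{H}(E_1,\ldots,E_n;F)$, in particular $\|A\|_{\mathcal{H}}$ is finite.

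To reach a contradiction, I observe that because $\tilde A_k$ takes values only in the $k$-th coordinate of $F$, a direct computation gives $P_j\circ A\circ(\hat\iota_1^j,\ldots,\hat\iota_n^j)=2^{-j/p}A_j$ for each $j$. The hyper-ideal inequality \eqref{eqhi} for $\mathcal{H}$ then yields
$$2^{2j/p}<\|A_j\|_{\mathcal{H}}=2^{j/p}\bigl\|P_j\circ A\circ(\hat\iota_1^j,\ldots,\hat\iota_n^j)\bigr\|_{\mathcal{H}}\le 2^{j/p}\|A\|_{\mathcal{H}},$$
so $\|A\|_{\mathcal{H}}\ge 2^{j/p}\to\infty$, contradicting the finiteness of $\|A\|_{\mathcal{H}}$. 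The main obstacle is the delicate balance in the assembly step: the scaling $2^{-k/p}$ must be small enough that the $p$-summability needed for the series criterion holds (forcing $A\in\mathcal{G}\subseteq\mathcal{H}$), yet large enough that after the factorization through $(P_j,\hat\iota_i^j)$ the blow-up of $\|A_j\|_{\mathcal{H}}$ still dominates; the choice $\|A_k\|_{\mathcal{H}}>2^{2k/p}$ against $2^{-k/p}$ is what achieves this.
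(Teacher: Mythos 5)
Your proof is correct and follows essentially the same route as the paper's: negate the statement, normalize the resulting bad sequence, assemble a master operator on direct sums via the series criterion for the $p$-Banach hyper-ideal $\mathcal{G}$, and recover each $A_k$ by composing with norm-one inclusions and projections so that the ideal inequality for $\mathcal{H}$ contradicts the finiteness of $\|A\|_{\mathcal{H}}$. The only cosmetic differences are your (harmless but unnecessary) closed-graph preliminary, your use of $\ell_\infty$-sums where the paper takes $\ell_1$-sums of the domain spaces, and a slightly different normalization/scaling of the bad sequence.
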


\begin{proof}Suppose, by contradiction, that there exists $m\in\mathbb{N}$ for which no such constant $C_m$ exists. This means that, for each $n\in\mathbb{N}$, there are Banach spaces $E_1^{(n)}, \ldots, E_m^{(n)},G_n$ and an operator $A_n\in\mathcal{G}(E_1^{(n)},\ldots,E_m^{(n)};G_n)$ such that $\|A_n\|_{\mathcal{H}}>2^{n/p}n\|A_n\|_{\mathcal{G}}$. It is not difficult to see that we can assume, wlog, that $\|A_n\|_{\mathcal{G}}=\dfrac{1}{2^{n/p}}$. So
$\|A_n\|_{\mathcal{H}}> n$ for every $n$. Consider the Banach spaces
  $E_i=\ap\bigoplus\limits_{n=1}^\infty E_i^{(n)}\fp_1$, $i=1,\ldots,m$, and $G=\ap\bigoplus\limits_{n=1}^\infty G_n\fp_\infty$. Now let $\pi_{in}\colon E_i\longrightarrow E_i^{(n)}$ and $\iota_n \colon G_n\longrightarrow G$ be the corresponding canonical projections and inclusions, respectively. Calling on the hyper-ideal property we get $\iota_n\circ A\circ(\pi_{1n},\ldots,\pi_{mn})\in\mathcal{G}(E_1,\ldots,E_m;G)$ for every $n$ and \begin{align*}\sum\limits_{n=1}^\infty\|\iota_n\circ A\circ(\pi_{1n},\ldots,\pi_{mn})\|_{\mathcal{G}}^{p}&\le\sum\limits_{n=1}^\infty
\|\iota_n\|^{p}\cdot \|A\|_{\mathcal{G}}^{p}\cdot \|\pi_{1n}\|^{p}\cdots \|\pi_{mn}\|^{p}= \sum\limits_{n=1}^\infty\dfrac{1}{2^n}<\infty.
\end{align*} As $(\mathcal{G},\|\cdot\|_{\mathcal{G}})$ is $p$-Banach hyper-ideal, by Theorem \ref{cs} and from the assumptions it follows that
$$A=\sum\limits_{n=1}^\infty \iota_n\circ A\circ(\pi_{1n},\ldots,\pi_{mn})\in\mathcal{G}(E_1,\ldots,E_m;G)
\subseteq\mathcal{H}(E_1,\ldots,E_m;G).$$ On other hand, calling $\iota_{in}\colon E_i^{(n)}\longrightarrow E_i$, $i=1,\ldots,m$, and $\pi_n \colon G\longrightarrow G_n$ the corresponding canonical projections and inclusions, we have $A_n=\pi_n\circ A\circ(\iota_{1n},\ldots,\iota_{mn})$ and $$n < \|A_n\|_{\mathcal{H}}=\|\pi_n\circ A\circ(\iota_{1n},\ldots,\iota_{mn})\|_{\mathcal{H}}\le\|\pi_n\|
\cdot\|A\|_{\mathcal{H}}\cdot \|\iota_{1n}\|\cdots\|\iota_{mn}\| = \|A\|_{\mathcal{H}},$$
for every $n$, a contradiction that completes the proof.\end{proof}

\section{Distinguished examples and the smallest Banach hyper-ideal}\label{action}
In this section we provide a number of illustrative examples of hyper-ideals, as well as important examples of multi-ideals that fail to be hyper-ideals. For each important non-hyper-ideal multi-ideal $\cal M$ we exhibit a hyper-ideal $\cal H$ that generalizes the same linear operator ideal, that is ${\cal M}^1 = {\cal H}^1$. This is the case of the class of nuclear multilinear operators, whose solution in the context of hyper-ideals gives a full description of the smallest Banach hyper-ideal (cf. Theorem \ref{hnmb}). The examples and results given in this section, together with the ones given in Section 3, make clear that, whenever the theory of multi-ideals is helpless, the theory of hyper-ideals is rich enough to provide its own solution.

\begin{ex}\label{ex1}\rm Exposing a deep distinction with the theory of multi-ideals, the purpose of this example is to show that the class ${\cal L}_f$ of the finite type multilinear operators is not a hyper-ideal. To do so, consider any scalar-valued multilinear operator that fails to be of finite type, for instance the following bilinear operator:
$$T\colon \ell_2 \times \ell_2 \longrightarrow \mathbb{K}~,~ T\left((x_i)_{i=1}^\infty,(y_i)_{i=1}^\infty\right)=\sum\limits_{i=1}^\infty x_iy_i.$$
Supposing that $\mathcal{L}_{f}$ is a hyper-ideal, as $Id_{\mathbb{K}} \in\mathcal{L}_{f}(\mathbb{K};\mathbb{K})$, the hyper-ideal property would give $T=Id_{\mathbb{K}}\circ T \in \mathcal{L}_{f}(^2\ell_2;\mathbb{K})$. This contradiction shows that ${\cal L}_f$  fails to be a hyper-ideal. As a matter of fact, the bilinear operator $T$ is not approximable by finite type operators, so the same reasoning shows that the multi-ideal $\overline{{\cal L}_f}$ of multilinear operators that can be approximated, in the uniform norm, by finite type operators is not a hyper-ideal either.\end{ex}

The class $\mathcal{L}_{f}$ is a distinguished multi-ideal in the sense that it is the smallest multi-ideal. As it is not a hyper-ideal, a smallest hyper-ideal is needed. Moreover its linear component should be the linear component of $\mathcal{L}_{f}$, that is, the operator ideal of finite rank operators. The example above gives us the hint:

\begin{teo}\label{mhi} The class $\mathcal{L}_\mathcal{F}$ of finite rank multilinear operators with the uniform norm is the smallest normed hyper-ideal. This means that, if $(\mathcal{H},\|\cdot\|_\mathcal{H})$ is a normed hyper-ideal, then $\mathcal{L}_\mathcal{F}\subseteq\mathcal{H}$ and $\|\cdot\|\le\|\cdot\|_\mathcal{H}$.\end{teo}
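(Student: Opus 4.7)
The statement has two halves that I would handle separately: first, that $(\mathcal{L}_\mathcal{F},\|\cdot\|)$ is itself a normed hyper-ideal, and second, that it sits inside every other normed hyper-ideal with a dominant norm.

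For the first half, the plan is to verify Definition \ref{dhi} directly. Each component $\mathcal{L}_\mathcal{F}(E_1,\ldots,E_n;F)$ is clearly a linear subspace of $\mathcal{L}(E_1,\ldots,E_n;F)$ (sums and scalar multiples of finite rank maps are finite rank) and contains every $n$-linear operator of finite type, since finite type operators are finite rank by construction. The uniform norm is a genuine norm on each component, and $\|I_n\|=1$ is standard. The hyper-ideal property is the only item requiring a brief check: given $A\in\mathcal{L}_\mathcal{F}(E_1,\ldots,E_n;F)$ with finite dimensional range $R$, and operators $B_1,\ldots,B_n,t$ as in Definition \ref{dhi}(2), the range of $t\circ A\circ(B_1,\ldots,B_n)$ is contained in $t(R)$, which is finite dimensional; so the composition is finite rank. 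The norm inequality $\|t\circ A\circ(B_1,\ldots,B_n)\|\le\|t\|\cdot\|A\|\cdot\|B_1\|\cdots\|B_n\|$ is the standard estimate for the uniform norm of a composition of continuous multilinear operators.

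For the second half, let $(\mathcal{H},\|\cdot\|_\mathcal{H})$ be an arbitrary normed hyper-ideal and let $A\in\mathcal{L}_\mathcal{F}(E_1,\ldots,E_n;F)$. Fix a basis $y_1,\ldots,y_k$ of the linear span of the range of $A$, let $y_1^*,\ldots,y_k^*$ be the associated coordinate functionals on that subspace, and extend them by Hahn--Banach to $F'$. Setting $\psi_j := y_j^*\circ A\in\mathcal{L}(E_1,\ldots,E_n)$, each $\psi_j$ is a continuous scalar-valued $n$-linear form, and one verifies pointwise that
\[
A=\sum_{j=1}^{k}\psi_j\otimes y_j,
\]
where $\psi_j\otimes y_j$ is the $n$-linear operator $(x_1,\ldots,x_n)\mapsto \psi_j(x_1,\ldots,x_n)\,y_j$. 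Now I would invoke Proposition \ref{ndeshi} in the degenerate case where the proposition's $n$ equals $1$ and $m_1=n$: taking $T_1=\psi_j$ and the target vector $y=y_j$, the proposition yields $\psi_j\otimes y_j\in\mathcal{H}(E_1,\ldots,E_n;F)$. Since each component of $\mathcal{H}$ is a linear subspace, it follows that $A\in\mathcal{H}(E_1,\ldots,E_n;F)$, so $\mathcal{L}_\mathcal{F}\subseteq\mathcal{H}$. The inequality $\|A\|\le\|A\|_\mathcal{H}$ is then immediate from Remark \ref{obshmi}(i), namely the inequality (\ref{deshi}) inherited from the multi-ideal structure.

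The only nontrivial step is the decomposition $A=\sum_j\psi_j\otimes y_j$, and the subsequent observation that the single-form case of Proposition \ref{ndeshi} is exactly what is needed to plant these rank-one summands inside $\mathcal{H}$. Everything else is routine bookkeeping. I do not expect any serious obstacle; the delicate part conceptually is recognizing that Proposition \ref{ndeshi} already does the heavy lifting, so that the proof reduces to the standard linear-algebraic decomposition of a finite rank multilinear operator.
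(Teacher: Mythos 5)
Your proof is correct and takes essentially the same route as the paper's: the first half (range of $t\circ A\circ(B_1,\ldots,B_n)$ lies in the finite-dimensional space ${\rm span}\,R(t\circ A)$) is verified identically, and your decomposition $A=\sum_{j}\psi_j\otimes y_j$ is exactly the paper's observation that every finite rank operator is a linear combination of operators $B\otimes y$. The only cosmetic difference is that the paper puts each summand into $\mathcal{H}$ by applying the hyper-ideal property directly to $B\otimes y=(1\otimes y)\circ B$, whereas you invoke the degenerate case of Proposition \ref{ndeshi}, which encapsulates the same one-dimensional-intermediate-space trick.
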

\begin{proof} Condition \ref{dhi}(1) is straightforwardly checked for $\mathcal{L}_\mathcal{F}$. For the hyper-ideal property, let $t \in \mathcal{L}(F;H)$, $A \in \mathcal{\mathcal{L}_{\mathcal{F}}}(E_1,\ldots, E_n;F), B_1\in \mathcal{L}(G_1,\ldots, G_{m_1};E_1), \ldots,$ and $B_n\in \mathcal{L}(G_{m_{n-1}+1},\ldots, G_{m_n};E_n)$  be given. By $R(C)$ we mean the range of the map $C$. Then ${\rm span}R(A)$ is a finite dimensional subspace of $F$, and since $t$ is linear, ${\rm span}R(t \circ A)$ is a finite dimensional subspace of $H$, so
$${\rm span} R(t \circ A \circ (B_1, \ldots, B_n)) \subseteq {\rm span}R(t\circ A),$$
what yields that $t \circ A \circ (B_1, \ldots, B_n)$ is of finite rank, proving that $\mathcal{L}_\mathcal{F}$ is a hyper-ideal.

Now let $(\mathcal{H},\|\cdot\|_\mathcal{H})$ be a normed hyper-ideal and $B \in \mathcal{L}(E_1,\ldots, E_n)$ and $y \in F$ be given. Considering again the linear operator $1\otimes y\colon \mathbb{K} \longrightarrow F$ defined by $1\otimes y(\lambda)=\lambda y$, we have $1\otimes y \in \mathcal{L}_{f}(\mathbb{K};F) \subseteq \mathcal{H}(\mathbb{K};F)$. From the hyper-ideal property it follows that $$B\otimes y=(1\otimes y)\circ B\in\mathcal{H}(E_1,\ldots, E_n;F).$$
As any finite rank $n$-linear operator is a linear combination of operators of the form $B \otimes y$ and $\mathcal{H}(E_1,\ldots, E_n;F)$ is a linear space, it follows that $\mathcal{H}(E_1,\ldots, E_n;F)$ contains $\mathcal{L}_{\mathcal{F}}(E_1,\ldots, E_n;F)$.
The norm inequality follows from  (\ref{deshi}).\end{proof}

A deep distinction with the theory of multi-ideals is now clear, namely, in Definition \ref{dhi}, the containment of the finite type operators can be equivalently replaced by the containment of the finite rank operators:

\begin{cor}\label{difmulti} Let $\mathcal{H}$ be a class of continuous multilinear operators fulfilling the hyper-ideal property {\rm \ref{dhi}(2)} such that each component $\mathcal{H}(E_1,\ldots, E_n;F)$ is a linear subspace of $\mathcal{L}(E_1,\ldots, E_n;F)$. Then ${\cal L}_f \subseteq \cal H$ if and only if ${\cal L}_{\cal F} \subseteq \cal H$.\end{cor}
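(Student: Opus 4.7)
The plan is to prove the two implications separately. The implication $\mathcal{L}_\mathcal{F}\subseteq\mathcal{H} \Rightarrow \mathcal{L}_f\subseteq\mathcal{H}$ is immediate: each elementary tensor $\varphi_1\otimes\cdots\otimes\varphi_n\otimes y$ has range inside the one-dimensional subspace $\mathrm{span}\{y\}$, so any finite linear combination has finite-dimensional range, giving the inclusion $\mathcal{L}_f\subseteq\mathcal{L}_\mathcal{F}$ at the level of classes.

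The substantial direction is the converse. Assume $\mathcal{L}_f\subseteq\mathcal{H}$ and fix $A\in\mathcal{L}_\mathcal{F}(E_1,\ldots,E_n;F)$. My strategy is to exhibit a concrete finite decomposition $A=\sum_{i=1}^k B_i\otimes y_i$ with $B_i\in\mathcal{L}(E_1,\ldots,E_n)$ and $y_i\in F$, and then to place each summand in $\mathcal{H}$ via the hyper-ideal property applied to a carefully chosen finite type element of $\mathcal{H}$. To build the decomposition, I would pick a basis $y_1,\ldots,y_k$ of $\mathrm{span}\,R(A)$, extend the corresponding coordinate functionals on this finite-dimensional subspace to $y_1^*,\ldots,y_k^*\in F'$ via Hahn-Banach, and set $B_i:=y_i^*\circ A$. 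Continuity and $n$-linearity of $B_i$ are automatic, and expanding each $A(x_1,\ldots,x_n)$ in the chosen basis yields $A=\sum_{i=1}^k B_i\otimes y_i$.

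To show that each $B_i\otimes y_i$ belongs to $\mathcal{H}(E_1,\ldots,E_n;F)$, I observe that the linear map $1\otimes y_i\colon\mathbb{K}\to F$, $\lambda\mapsto \lambda y_i$, is a finite type $1$-linear operator and therefore lies in $\mathcal{L}_f(\mathbb{K};F)\subseteq\mathcal{H}(\mathbb{K};F)$ by hypothesis. Applying the hyper-ideal property \ref{dhi}(2) with the element $1\otimes y_i\in\mathcal{H}(\mathbb{K};F)$ in the middle, the linear operator $t=\mathrm{id}_F$ on the left, and the single $n$-linear factor $B_i$ on the right, I obtain
\[
B_i\otimes y_i=(1\otimes y_i)\circ B_i=\mathrm{id}_F\circ(1\otimes y_i)\circ(B_i)\in\mathcal{H}(E_1,\ldots,E_n;F).
\]
The hypothesis that each component of $\mathcal{H}$ is a linear subspace then gives $A=\sum_{i=1}^k B_i\otimes y_i\in\mathcal{H}(E_1,\ldots,E_n;F)$.

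The only non-routine step is the finite rank decomposition, but it is a standard consequence of linear algebra plus Hahn-Banach, with the only subtlety being to check that the scalar factors $B_i=y_i^*\circ A$ are continuous $n$-linear (which is automatic as compositions of a continuous linear functional with the continuous multilinear $A$). After that, the proof is a single invocation of the hyper-ideal property, parallel to the second half of the proof of Theorem \ref{mhi}, isolated here to make explicit that the weaker containment $\mathcal{L}_f\subseteq\mathcal{H}$ already forces the stronger containment $\mathcal{L}_\mathcal{F}\subseteq\mathcal{H}$ once the hyper-ideal property is in place.
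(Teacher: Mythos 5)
Your proposal is correct and follows essentially the same route as the paper: the paper derives this corollary from the second half of the proof of Theorem \ref{mhi}, where $1\otimes y\in\mathcal{L}_f(\mathbb{K};F)\subseteq\mathcal{H}(\mathbb{K};F)$ is composed with an $n$-linear form $B$ via the hyper-ideal property to get $B\otimes y\in\mathcal{H}$, and finite rank operators are written as linear combinations of such $B\otimes y$. Your only addition is to spell out the Hahn--Banach decomposition $A=\sum_{i=1}^k(y_i^*\circ A)\otimes y_i$, which the paper leaves implicit.
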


It is well known that the class $\cal N$ of nuclear linear operators is the smallest Banach operator ideal and the class ${\cal L}_{\cal N}$ of nuclear multilinear mappings is the smallest Banach multi-ideal. So ${\cal L}_{\cal N}$ is our first candidate to be the smallest Banach hyper-ideal. Next we see that this is not the case. The study of nuclear nonlinear operators goes back to Gupta \cite{gupta}. In the multilinear setting, an $n$-linear operator $A\in\mathcal{L}(E_1,\ldots,E_n;F)$ is called {\it nuclear} if, for each $l = 1,\ldots,n$, we can find a bounded sequence $(\varphi_j^{(l)})_{j=1}^\infty$ of linear functionals in $E_l'$, a sequence $(\lambda_j)_{j=1}^\infty\in\ell_1$ and a bounded sequence $(y_j)_{j=1}^\infty$ in $F$ such that $$A(x_1,\ldots,x_n)=\sum\limits_{j=1}^\infty \lambda_j\varphi_j^{(1)}(x_1)\cdots\varphi_j^{(n)}(x_n)\cdot y_j$$
for all $x_1 \in E_1, \ldots, x_n \in E_n$. The expression above is called a \textit{nuclear representation} for $A$ and the space of all nuclear $n$-linear operators from $E_1 \times \cdots \times E_n$ to $F$ is denoted by  $ \mathcal{L}_\mathcal{N}(E_1,\ldots,E_n;F)$. The nuclear norm
$\|\cdot\|_{\mathcal{L}_\mathcal{N}}\colon \mathcal{L}_\mathcal{N}\longrightarrow[0,\infty)$ is defined by $$\|A\|_{\mathcal{L}_\mathcal{N}}=\inf\left\{\sum\limits_{j=1}^\infty |\lambda_j|\cdot\|\varphi_j^{(1)}\|\cdots\|\varphi_j^{(n)}\|\cdot\|y_j\|\right\},$$ where the infimum is taken over all nuclear representations of $A$. As mentioned before, $({\cal L}_{\cal N}, \|\cdot \|_{{\cal L}_{\cal N}})$ is the smallest Banach multi-ideal. Its linear components recover the classical ideal of nuclear operators.

\begin{ex}\rm \label{nucl} Let us see that the class $\mathcal{L}_\mathcal{N}$ of nuclear multilinear operators fails to be a hyper-ideal. Indeed, considering the partial sums of a nuclear representation of a nuclear multilinear mapping, it is not difficult to check that $\mathcal{L}_\mathcal{N}\subseteq \overline{{\cal L}_f}$ In Example \ref{ex1} we saw that $\overline{{\cal L}_f}$ does not contain ${\cal L}_{\cal F}$, so $\mathcal{L}_\mathcal{N}$ does not contain ${\cal L}_{\cal F}$ either. As  $\mathcal{L}_\mathcal{N}$ is a multi-ideal, from Corollary \ref{difmulti} we conclude that $\mathcal{L}_\mathcal{N}$ does not fulfil the hyper-ideal property.\end{ex}

Once the multi-ideal of nuclear multilinear operators is out of the game, we need another class to play the role of the smallest Banach hyper-ideal. Thinking about the linear components, this new class should be a multilinear generalization of the ideal of nuclear linear operators different from ${\cal L}_{\cal N}$. Having this in mind, and inspired by the several nuclear-type multilinear operators introduced in Matos \cite{matos} and Popa \cite{popan}, we consider the following classes of multilinear operators (we assume $1/\infty = 0$).

\begin{definition}\rm Let $s\in(0,\infty)$ and $r\in [1,\infty]$ be such that $1\le1/s+1/r$. An $n$-linear continuous operator $A \in \mathcal{L}(E_1,\ldots,E_n;F)$ is called \textit{hyper-$(s,r)$-nuclear} if there are sequences $(\lambda_j)_{j=1}^\infty\in\ell_s$, $(T_j)_{j=1}^\infty\in\ell_r^w(\mathcal{L}(E_1,\ldots,E_n))$ and $(y_j)_{j=1}^\infty\in\ell_\infty(F)$ such that \begin{equation}\label{eq:hn}A(x_1,\ldots,x_n)=\sum\limits_{j=1}^\infty\lambda_jT_j\otimes y_j(x_1,\ldots,x_n)=\sum\limits_{j=1}^\infty\lambda_jT_j(x_1,\ldots,x_n) y_j,\end{equation} for all $(x_1,\ldots,x_n)\in E_1\times\cdots\times E_n$. In this case we write $A\in\mathcal{L}_{\mathcal{HN}_(s,r)}(E_1,\ldots,E_n;F)$. The {\it hyper-$(s,r)$-nuclear norm} $\|\cdot\|_{\mathcal{L}_{\mathcal{HN}_{(s,r)}}}\colon \mathcal{L}_{\mathcal{HN}_{(s,r)}}\longrightarrow [0,\infty)$ is defined by $$\|A\|_{\mathcal{L}_{\mathcal{HN}_{(s,r)}}}=\inf\{\|(\lambda_j)_{j=1}^\infty\|_s\cdot \|(T_j)_{j=1}^\infty\|_{w,r}\cdot\|(y_j)_{j=1}^\infty\|_\infty\},$$ where the infimun is taken over all representations of $A$ as in (\ref{eq:hn}).\end{definition}

It is easy to prove that the series in (\ref{eq:hn}) is absolutely convergent, hence convergent.

In the case $s=1$ and $r=\infty$ we have $(\lambda_j)_{j=1}^\infty\in\ell_1$ and that the sequence $(T_j)_{j=1}^\infty$ is bounded (remember that $\ell_\infty^w(E)=\ell_\infty(E)$ for every Banach space $E$). Thus every nuclear multilinear operator, in the sense of Example \ref{nucl}, is  hyper-$(1,\infty)$-nuclear. For this reason, hyper-$(1,\infty)$-nuclear operators are simply called \textit{hyper-nuclear} operators and we write $\mathcal{L}_{\mathcal{HN}}$ instead of $\mathcal{L}_{\mathcal{HN}_{(1,\infty)}}$. It is clear that the linear component of $\mathcal{L}_{\mathcal{HN}}$ coincides with the ideal of nuclear operators. Now it is clear that $\mathcal{L}_{\mathcal{HN}}$ is a good candidate to be the smallest Banach hyper-ideal. We treat the general case before going into this issue:

\begin{teo}\label{hnhi}The class $(\mathcal{L}_{\mathcal{HN}_{(s,r)}}, \|\cdot\|_{\mathcal{L}_{\mathcal{HN}_{(s,r)}}})$ of hyper-$(s,r)$-nuclear
multilinear operators is a $p$-Banach hyper-ideal, where $\frac{1}{p}=\frac{1}{s}+\frac{1}{r}$.\end{teo}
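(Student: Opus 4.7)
The plan is to invoke the series criterion (Theorem \ref{cs}), so three conditions must be verified: (i) that $\|I_n\|_{\mathcal{L}_{\mathcal{HN}_{(s,r)}}}=1$, (iii) the hyper-ideal inequality, and (ii) the $p$-summable completeness criterion. As a preliminary I would establish the basic estimate $\|A\|\le\|A\|_{\mathcal{L}_{\mathcal{HN}_{(s,r)}}}$. Indeed, given a representation $A=\sum_j\lambda_jT_j\otimes y_j$ and a point $(x_1,\ldots,x_n)$, the sequence $(T_j(x_1,\ldots,x_n))_j$ is the image of $(T_j)_j$ under the evaluation functional on $\mathcal{L}(E_1,\ldots,E_n)$, whose norm is $\|x_1\|\cdots\|x_n\|$, hence it belongs to $\ell_r$ with norm at most $\|x_1\|\cdots\|x_n\|\cdot\|(T_j)\|_{w,r}$. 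Generalized H\"older with $\tfrac{1}{p}=\tfrac{1}{s}+\tfrac{1}{r}$ combined with the inclusion $\ell_p\subseteq\ell_1$ (valid since $p\le 1$) then yields absolute pointwise convergence of the defining series and the stated norm estimate.

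For item (i), the one-term representation $I_n=1\cdot I_n\otimes 1$ gives $\|I_n\|_{\mathcal{L}_{\mathcal{HN}_{(s,r)}}}\le 1$, and the reverse inequality follows from the preliminary estimate together with $\|I_n\|=1$. For item (iii), given a representation $A=\sum_j\lambda_jT_j\otimes y_j$ of $A\in\mathcal{L}_{\mathcal{HN}_{(s,r)}}(E_1,\ldots,E_n;F)$, I rewrite
\[t\circ A\circ(B_1,\ldots,B_n)=\sum_{j=1}^{\infty}\lambda_j\bigl(T_j\circ(B_1,\ldots,B_n)\bigr)\otimes t(y_j).\]
This is a hyper-$(s,r)$-nuclear representation of the composition: the scalar sequence is unchanged; the assignment $T\mapsto T\circ(B_1,\ldots,B_n)$ is a bounded linear map from $\mathcal{L}(E_1,\ldots,E_n)$ into $\mathcal{L}(G_1,\ldots,G_{m_n})$ of norm at most $\|B_1\|\cdots\|B_n\|$, hence preserves weak $r$-summability with that constant, so $\|(T_j\circ(B_1,\ldots,B_n))\|_{w,r}\le\|B_1\|\cdots\|B_n\|\cdot\|(T_j)\|_{w,r}$; and $\|(t(y_j))\|_\infty\le\|t\|\cdot\|(y_j)\|_\infty$. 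Multiplying the three estimates and passing to the infimum over representations of $A$ yields the inequality (\ref{eqhi}).

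For item (ii), suppose $\sum_k\|A_k\|_{\mathcal{L}_{\mathcal{HN}_{(s,r)}}}^p<\infty$; the preliminary estimate and $\ell_p\subseteq\ell_1$ ensure that $A:=\sum_kA_k$ converges absolutely in $\mathcal{L}(E_1,\ldots,E_n;F)$. Given $\epsilon>0$, for each $k$ I pick a representation $A_k=\sum_j\lambda_j^{(k)}T_j^{(k)}\otimes y_j^{(k)}$ whose norm-product is at most $(1+\epsilon)\|A_k\|_{\mathcal{L}_{\mathcal{HN}_{(s,r)}}}$, and then exploit the two-parameter scaling freedom $(\lambda_j,T_j,y_j)\mapsto(\mu\lambda_j,\alpha T_j,\beta y_j)$ with $\mu\alpha\beta=1$ to rescale so that, writing $M_k$ for the (preserved) product of the three norms,
\[\|(y_j^{(k)})\|_\infty=1,\quad\|(\lambda_j^{(k)})\|_s=M_k^{p/s},\quad\|(T_j^{(k)})\|_{w,r}=M_k^{p/r},\quad M_k\le(1+\epsilon)\|A_k\|_{\mathcal{L}_{\mathcal{HN}_{(s,r)}}},\]
which is consistent because $p/s+p/r=1$. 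Reindexing $(k,j)$ into a single index $i$ gives a representation $A=\sum_i\tilde\lambda_i\tilde T_i\otimes\tilde y_i$; direct computation, using the bound $\sum_{k,j}|\phi(T_j^{(k)})|^r\le\|\phi\|^r\sum_kM_k^p$ for each $\phi$ in the dual unit ball of $\mathcal{L}(E_1,\ldots,E_n)$, produces
\[\|(\tilde\lambda_i)\|_s\cdot\|(\tilde T_i)\|_{w,r}\cdot\|(\tilde y_i)\|_\infty\le\Bigl((1+\epsilon)^p\sum_k\|A_k\|_{\mathcal{L}_{\mathcal{HN}_{(s,r)}}}^p\Bigr)^{1/s+1/r}=(1+\epsilon)\Bigl(\sum_k\|A_k\|_{\mathcal{L}_{\mathcal{HN}_{(s,r)}}}^p\Bigr)^{1/p}.\]
Letting $\epsilon\to 0$ yields $\|A\|_{\mathcal{L}_{\mathcal{HN}_{(s,r)}}}^p\le\sum_k\|A_k\|_{\mathcal{L}_{\mathcal{HN}_{(s,r)}}}^p$. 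The main obstacle is precisely this rescaling step, where one must juggle three norms using only two scaling parameters; the resolution is to first force $\|(y_j^{(k)})\|_\infty=1$ and then use the identity $p/s+p/r=1$ to split the remaining mass $M_k$ as the product $M_k^{p/s}\cdot M_k^{p/r}$.
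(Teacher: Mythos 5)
Your proposal is correct and follows essentially the same route as the paper: it verifies the three conditions of the series criterion (Theorem \ref{cs}), using the one-term representation plus the estimate $\|\cdot\|\le\|\cdot\|_{\mathcal{L}_{\mathcal{HN}_{(s,r)}}}$ for (i), the same normalization $\|(y_j^{(k)})\|_\infty=1$ with the mass split $M_k=M_k^{p/s}\cdot M_k^{p/r}$ and reindexing of the double series for (ii), and the composition operator $L_{(B_1,\ldots,B_n)}$ acting on weakly $r$-summable sequences for (iii). The only cosmetic difference is in (i), where the paper argues by contradiction via evaluation at $(e_1,\ldots,e_1)$ rather than through the general inequality $\|\cdot\|\le\|\cdot\|_{\mathcal{L}_{\mathcal{HN}_{(s,r)}}}$; the content is the same.
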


\begin{proof} Let us prove that the conditions in Theorem \ref{cs} are satisfied:\\
 (i) It is plain that $I_n\in\mathcal{L}_{\mathcal{HN}_{(s,r)}}(\mathbb{K}^n;\mathbb{K})$. Regarding $I_n$ as a representation of itself it follows that $\|I_n\|_{\mathcal{L}_{\mathcal{HN}_{(s,r)}}}\le1$. Assuming that $\|I_n\|_{\mathcal{L}_{\mathcal{HN}_{(s,r)}}}<1$, there would exist a representation $\sum\limits_{j=1}^\infty\lambda_j\otimes T_j$ of $I_n$ with $\|(\lambda_j)_{j=1}^\infty\|_{s}\cdot\|(T_j)_{j=1}^\infty\|_{w,r}<1$. But $$1=|I_n(e_1,\ldots,e_1)|\le\sum\limits_{j=1}^\infty|\lambda_j|\cdot\|T_j\|
\cdot\|e_1\|^n\le \|(\lambda_j)_{j=1}^\infty\|_{s}\cdot\|(T_j)_{j=1}^\infty\|_{w,r}<1,$$
a contradiction that gives $\|I_n\|_{\mathcal{L}_{\mathcal{HN}_{(s,r)}}}=1$.\\
(ii) Let $(A_j)_{j=1}^\infty\subseteq\mathcal{L}_{\mathcal{HN}_{(s,r)}}(E_1,\ldots,E_n;F)$ be such that $\sum\limits_{j=1}^\infty\|A_j\|_{\mathcal{L}_{\mathcal{HN}_{(s,r)}}}^p<\infty.$ Given $\varepsilon>0$, for every $j\in\mathbb{N}$ there are sequences $(\lambda_{jk})_{k=1}^\infty\in\ell_s$, $(T_{jk})_{k=1}^\infty\in\ell_r^w(\mathcal{L}(E_1,\ldots,E_n))$ and $(y_{jk})_{k=1}^\infty\in\ell_\infty(F)$ such that $A_j=\sum\limits_{k=1}^\infty\lambda_{jk} T_{jk}\otimes y_{jk}$  and $$\|(\lambda_{jk})_{k=1}^\infty\|_s\cdot\|(T_{jk})_{k=1}^\infty\|_{w,r}\cdot
\|(y_{jk})_{k=1}^\infty\|_\infty<(1+\varepsilon)\|A_j\|_{\mathcal{L}_{\mathcal{HN}_{(s,r)}}}.$$
Wlog, we can assume, for each $j$, that $\|(y_{jk})_{k=1}^\infty\|_\infty=1$ and $$\|(\lambda_{jk})_{k=1}^\infty\|_s<\ap(1+\varepsilon)\|A_j\|_{\mathcal{L}_{\mathcal{HN}_{(s,r)}}}
\fp^{p/s},\ \|(T_{jk})_{k=1}^\infty\|_{w,r}<\ap(1+\varepsilon)\|A_j\|_{\mathcal{L}_{\mathcal{HN}_{(s,r)}}}
\fp^{p/r}.$$ From \begin{eqnarray}\sum\limits_{j,k=1}^\infty|\lambda_{jk}|^s=\sum\limits_{j=1}^\infty
\sum\limits_{k=1}^\infty|\lambda_{jk}|^s=
\sum\limits_{j=1}^\infty\|(\lambda_{jk})_{k=1}^\infty\|_s^s <(1+\varepsilon)^p\cdot\sum\limits_{j=1}^\infty\|A_j\|_{\mathcal{L}_{\mathcal{HN}_{(s,r)}}}^p< \infty,\label{eq:s}\end{eqnarray} we conclude that $(\lambda_{jk})_{j,k=1}^\infty\in\ell_s$. For each linear functional $\varphi\in(\mathcal{L}(E_1,\ldots,E_n))'$ with $\|\varphi\| \leq 1$ we have \begin{align}\sum\limits_{j,k=1}^\infty|\varphi(T_{jk})|^r
\leq\sum\limits_{j=1}^\infty\|(T_{jk})_{k=1}^\infty\|_{w,r}^r < (1+\varepsilon)^p\cdot \sum\limits_{j=1}^\infty\|A_j\|_{\mathcal{L}_{\mathcal{HN}_{(s,r)}}}^p<\infty.\label{eq:r}
\end{align} Therefore $(T_{jk})_{j,k=1}^\infty\in\ell_{r}^w(\mathcal{L}(E_1,\ldots,E_n))$. We already know that $(y_{jk})_{j,k=1}^\infty\in\ell_\infty(F)$;  so, for all $x_1 \in E_1, \ldots, x_n \in E_n$, the series \begin{equation}\sum\limits_{j,k=1}^\infty\lambda_{jk} T_{jk}\otimes y_{jk}(x_1, \ldots, x_n) \label{eq:rep}\end{equation} is absolutely convergent in the Banach space $F$. Then,    \begin{align*}\sum\limits_{j,k=1}^\infty\lambda_{jk} T_{jk}\otimes y_{jk}(x_1, \ldots, x_n)&=\sum\limits_{j=1}^\infty\sum\limits_{k=1}^\infty\lambda_{jk} T_{jk}\otimes y_{jk}(x_1, \ldots, x_n)\\&=\sum\limits_{j=1}^\infty A_j(x_1, \ldots, x_n)=:A(x_1, \ldots, x_n)
\end{align*}
defines  $A \colon E_1 \times \cdots \times E_n \longrightarrow F$ and shows that  \eqref{eq:rep} it is a representation of $A$ as in (\ref{eq:hn}), proving that $A$ is hyper-$(s,r)$-nuclear.
As $\|(y_{jk})_{j,k=1}^\infty\|_\infty=1$ , from \eqref{eq:s} and \eqref{eq:r} we get \begin{eqnarray*}\|A\|_{\mathcal{L}_{\mathcal{HN}_{(s,r)}}}&\le& \|(\lambda_{jk})_{j,k=1}^\infty\|_s^p\cdot\|(T_{jk})_{j,k=1}^\infty\|_{w,r}^p\cdot\|(y_{jk})_{j,k=1}^\infty\|_\infty^p \\&\le&\ap(1+\varepsilon)^p\cdot\sum\limits_{j=1}^\infty\|A_j\|_{\mathcal{L}_{\mathcal{HN}_{(s,r)}}}^p
\fp^{p/s}\cdot \ap(1+\varepsilon)^p\cdot\sum\limits_{j=1}^\infty\|A_j\|_{\mathcal{L}_{\mathcal{HN}_{(s,r)}}}^p\fp^{p/r}\\
&=& (1+\varepsilon)^p\cdot\sum\limits_{j=1}^\infty\|A_j\|_{\mathcal{L}_{\mathcal{HN}_{(s,r)}}}^p.\end{eqnarray*}
Letting $\varepsilon\longrightarrow0$ we obtain the desired inequality.\\
(iii) Let $1\le m_1<\cdots<m_n$, $A \in \mathcal{L}_{\mathcal{HN}_{(s,r)}}(E_1,\ldots, E_n;F)$, $B_1\in \mathcal{L}(G_1,\ldots, G_{m_1};E_1),$ $\ldots, B_n\in \mathcal{L}(G_{m_{n-1}+1},\ldots, G_{m_n};E_n)$ and $t \in \mathcal{L}(F;H)$. We can write $A=\sum\limits_{j=1}^\infty\lambda_jT_j\otimes y_j$, with $(\lambda_j)_{j=1}^\infty\in\ell_s$, $(T_j)_{j=1}^\infty\in\ell_r^w(\mathcal{L}(E_1,\ldots,E_n))$ and $(y_j)_{j=1}^\infty\in \ell_\infty(F)$. Defining $S_j:=T_j\circ(B_1,\ldots,B_n)$ and $z_j=t(y_j)$ for every $j\in\mathbb{N}$, we have
\begin{align*}t\circ A\circ&(B_1,\ldots,B_n)(x_1,\ldots,x_{m_n})=
\sum\limits_{j=1}^\infty\lambda_jS_j\otimes z_j(x_1,\ldots,x_{m_n})\end{align*}
for all $x_1, \in E_1, \ldots, x_n \in E_n$.
    It is clear that $(z_j)_{j=1}^\infty\in\ell_\infty(H)$ and $$\|(z_j)_{j=1}^\infty\|_\infty\le\|t\|\cdot\|(y_j)_{j=1}^\infty\|_\infty.$$
Given a linear functional $\psi
\in\mathcal{L}(G_1,\ldots,G_{m_n})'$, considering the continuous linear operator $L_{(B_1,\ldots,B_n)}\colon \mathcal{L}(E_1,\ldots,E_{n})\longrightarrow\mathcal{L}(G_1,\ldots,G_{m_n})$ defined by $$L_{(B_1,\ldots,B_n)}(T)=T\circ(B_1,\ldots,B_n),$$
as $\psi\circ L_{(B_1,\ldots,B_n)}\in\mathcal{L}(E_1,\ldots,E_{n})'$ and $(T_j)_{j=1}^\infty\in\ell_r^w(\mathcal{L}(E_1,\ldots,E_{n}))$, we have \begin{eqnarray*}\sum\limits_{j=1}^\infty|\psi(S_j)|^r&=&
\sum\limits_{j=1}^\infty|\psi(T_j\circ(B_1,\ldots,B_n))|^r= \sum\limits_{j=1}^\infty|\psi(L_{(B_1,\ldots,B_n)}(T_j))|^r \\&=&\sum\limits_{j=1}^\infty|\psi\circ L_{(B_1,\ldots,B_n)}(T_j)|^r \le \|\psi \circ L_{(B_1,\ldots,B_n)}\|^r\cdot\|(T_j)_{j=1}^\infty\|_{w,r}^r <\infty,\end{eqnarray*} This shows that $(S_j)_{j=1}^\infty\in\ell_r^w(\mathcal{L}(G_1,\ldots,G_{m_n}))$, hence $\sum\limits_{j=1}^\infty\lambda_jS_j\otimes z_j$ is a representation of $t\circ A\circ(B_1,\ldots,B_n)$ as in (\ref{eq:hn}). So $t\circ A\circ(B_1,\ldots,B_n)$ is hyper-$(s,r)$-nuclear and
\begin{align*}\|t\circ A\circ(B_1,\ldots &,B_n)\|_{\mathcal{L}_{\mathcal{HN}_{(s,r)}}}\le
\|(\lambda_j)_{j=1}^\infty\|_{s}\cdot\|(S_j)_{j=1}^\infty\|_{w,r}\cdot\|(z_j)_{j=1}^\infty\|_\infty \\& \leq \|(\lambda_j)_{j=1}^\infty\|_{s}\cdot\|L_{(B_1,\ldots,B_n)}
\|\cdot\|(T_j)_{j=1}^\infty\|_{w,r}\cdot\|(z_j)_{j=1}^\infty\|_\infty
\\& \leq  \|(\lambda_j)_{j=1}^\infty\|_{s}\cdot\|B_1\| \cdots \|B_n\|
\cdot\|(T_j)_{j=1}^\infty\|_{w,r}\cdot\|(z_j)_{j=1}^\infty\|_\infty
\\&\le \|t\|\cdot\left[\|(\lambda_j)_{j=1}^\infty\|_{s}\cdot
\|(T_j)_{j=1}^\infty\|_{w,r}\cdot\|(y_j)_{j=1}^\infty\|_\infty\right]\cdot\|B_1\|\cdots\|B_n\|.
\end{align*}Taking the infimum over all hyper-$(s,r)$-nuclear representations of $A$ we have $$\|t\circ A\circ(B_1,\ldots,B_n)\|_{\mathcal{L}_{\mathcal{HN}_{(s,r)}}}\le \|t\|\cdot\|A\|_{\mathcal{L}_{\mathcal{HN}_{(s,r)}}}\cdot\|B_1\|\cdots\|B_n\|.$$\end{proof}

\begin{cor} The class $({\cal L}_{\cal HN}, \|\cdot\|_{{\cal L}_{\cal HN}})$ of hyper-nuclear multilinear operators is a Banach hyper-ideal and  ${\cal L}_{\cal N} \subsetneqq{\cal L}_{\cal HN}$.\end{cor}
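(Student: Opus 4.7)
The plan is to combine the general result Theorem \ref{hnhi} with the specific values $s=1$ and $r=\infty$, and then exhibit a concrete witness to the strict containment using the bilinear form already constructed in Example \ref{ex1}. First, since $1 \leq \frac{1}{1} + \frac{1}{\infty} = 1$, the pair $(s,r) = (1,\infty)$ is admissible in the definition of the hyper-$(s,r)$-nuclear class, and Theorem \ref{hnhi} immediately yields that $\mathcal{L}_{\mathcal{HN}_{(1,\infty)}} = \mathcal{L}_{\mathcal{HN}}$ is a $p$-Banach hyper-ideal with $\frac{1}{p} = \frac{1}{1} + \frac{1}{\infty} = 1$; that is, $p=1$, which is precisely a Banach hyper-ideal. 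This settles the first assertion.

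For the inclusion $\mathcal{L}_{\mathcal{N}} \subseteq \mathcal{L}_{\mathcal{HN}}$, I would spell out the observation already made in the paragraph preceding Theorem \ref{hnhi}. Given $A \in \mathcal{L}_{\mathcal{N}}(E_1,\ldots,E_n;F)$ with nuclear representation $A = \sum_{j=1}^\infty \lambda_j \varphi_j^{(1)} \otimes \cdots \otimes \varphi_j^{(n)} \otimes y_j$, set $T_j := \varphi_j^{(1)} \otimes \cdots \otimes \varphi_j^{(n)} \in \mathcal{L}(E_1,\ldots,E_n)$. Since each functional sequence $(\varphi_j^{(l)})_{j}$ is bounded in $E_l'$, the sequence $(T_j)_{j=1}^\infty$ is bounded in $\mathcal{L}(E_1,\ldots,E_n)$, hence belongs to $\ell_\infty^w(\mathcal{L}(E_1,\ldots,E_n)) = \ell_\infty(\mathcal{L}(E_1,\ldots,E_n))$. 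Combined with $(\lambda_j) \in \ell_1$ and $(y_j) \in \ell_\infty(F)$, the very same expansion becomes a representation of $A$ of the form (\ref{eq:hn}) for $(s,r)=(1,\infty)$, proving $A \in \mathcal{L}_{\mathcal{HN}}$; taking the infimum over all nuclear representations also yields $\|A\|_{\mathcal{L}_{\mathcal{HN}}} \leq \|A\|_{\mathcal{L}_{\mathcal{N}}}$.

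For the strictness, I would use the bilinear operator $T\colon \ell_2 \times \ell_2 \longrightarrow \mathbb{K}$, $T((x_i)_{i=1}^\infty,(y_i)_{i=1}^\infty)=\sum_{i=1}^\infty x_iy_i$, of Example \ref{ex1}. As argued there, $T$ is not approximable in the uniform norm by finite type operators, so $T \notin \overline{\mathcal{L}_f}$. Since every nuclear multilinear operator lies in $\overline{\mathcal{L}_f}$ (as pointed out in Example \ref{nucl}), it follows that $T \notin \mathcal{L}_{\mathcal{N}}(^2\ell_2;\mathbb{K})$. On the other hand, $T$ is itself a continuous bilinear form, so it admits the trivial one-term expansion $T=1\cdot T\cdot 1$ (take $\lambda_1=1$, $T_1=T$, $y_1=1$, and all remaining terms equal to zero), which matches (\ref{eq:hn}) for $(s,r)=(1,\infty)$. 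Hence $T \in \mathcal{L}_{\mathcal{HN}}(\ell_2,\ell_2;\mathbb{K})\setminus \mathcal{L}_{\mathcal{N}}(\ell_2,\ell_2;\mathbb{K})$, giving the strict containment. I do not anticipate any serious obstacle: everything reduces to applying Theorem \ref{hnhi} at the admissible parameters $(1,\infty)$ and recycling the witness already constructed in Example \ref{ex1}; the only point needing attention is the admissibility check $\tfrac{1}{s}+\tfrac{1}{r}\ge 1$ at $(1,\infty)$ and the identification $\ell_\infty^w=\ell_\infty$ that converts a nuclear representation into a hyper-nuclear one.
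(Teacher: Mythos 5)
Your proposal is correct and follows essentially the same route as the paper: the first assertion is obtained exactly as in the text by specializing Theorem \ref{hnhi} to $(s,r)=(1,\infty)$ (so $p=1$), and the inclusion $\mathcal{L}_{\mathcal{N}}\subseteq\mathcal{L}_{\mathcal{HN}}$ is the observation already recorded before that theorem. The only divergence is in the strictness: the paper argues indirectly that the classes must differ because $\mathcal{L}_{\mathcal{N}}$ is not a hyper-ideal (Example \ref{nucl}) while $\mathcal{L}_{\mathcal{HN}}$ is, whereas you exhibit the explicit witness $T\in\mathcal{L}_{\mathcal{HN}}(^2\ell_2;\mathbb{K})\setminus\mathcal{L}_{\mathcal{N}}(^2\ell_2;\mathbb{K})$ via its trivial one-term representation --- both arguments are valid and ultimately rest on the same bilinear form from Example \ref{ex1}.
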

\begin{proof} Make $s = 1$ and $r = \infty$ in Theorem \ref{hnhi} to obtain the first assertion. The inclusion is obvious and the classes are different because ${\cal L}_{\cal N}$ is not a hyper-ideal (Example \ref{nucl}).\end{proof}

To show that ${\cal L}_{\cal HN}$ is the smallest Banach hyper-ideal we need the following characterization of hyper-nuclear norm:

\begin{lema}\label{lemahn}For every operator $A\in\mathcal{L}_{\mathcal{HN}}(E_1,\ldots,E_n;F)$, $$\|A\|_{\mathcal{L}_{\mathcal{HN}}}=\inf\ach\sum\limits_{j=1}^\infty|\lambda_j|\cdot\|T_j\|\cdot\|y_j\|\fch,$$ where the infimum is taken over all representations of $A$ as in {\rm (\ref{eq:hn})}.\end{lema}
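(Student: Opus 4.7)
The plan is to show both inequalities between
$$\|A\|_{\mathcal{L}_{\mathcal{HN}}} = \inf \|(\lambda_j)\|_1 \cdot \|(T_j)\|_{w,\infty}\cdot \|(y_j)\|_\infty \quad \text{and} \quad N(A):=\inf \sum_{j=1}^\infty |\lambda_j|\cdot\|T_j\|\cdot\|y_j\|,$$
where both infima run over the representations of $A$ as in \eqref{eq:hn}. First I would record the obvious identification $\ell_\infty^w(\mathcal{L}(E_1,\ldots,E_n))=\ell_\infty(\mathcal{L}(E_1,\ldots,E_n))$, so that $\|(T_j)\|_{w,\infty}=\sup_j\|T_j\|$, which is what makes both quantities comparable term-by-term.

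For the inequality $N(A)\le \|A\|_{\mathcal{L}_{\mathcal{HN}}}$, pick any admissible representation $A=\sum_j \lambda_j T_j\otimes y_j$ and estimate
$$\sum_{j=1}^\infty |\lambda_j|\cdot\|T_j\|\cdot\|y_j\| \;\le\; \left(\sup_k\|T_k\|\right)\left(\sup_k\|y_k\|\right)\sum_{j=1}^\infty|\lambda_j| \;=\; \|(\lambda_j)\|_1\cdot\|(T_j)\|_{w,\infty}\cdot\|(y_j)\|_\infty.$$
Taking the infimum over representations yields $N(A)\le\|A\|_{\mathcal{L}_{\mathcal{HN}}}$.

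For the reverse inequality $\|A\|_{\mathcal{L}_{\mathcal{HN}}}\le N(A)$, I would use the standard normalization trick. Start from a representation $A=\sum_j \lambda_j T_j\otimes y_j$ with $\sum_j|\lambda_j|\|T_j\|\|y_j\|<\infty$. Discard the indices for which $\lambda_j T_j\otimes y_j =0$ (they contribute nothing to either the representation or the sum), and for the remaining indices set
$$\mu_j := \lambda_j\,\|T_j\|\,\|y_j\|,\qquad S_j := \frac{T_j}{\|T_j\|},\qquad z_j := \frac{y_j}{\|y_j\|}.$$
Then $\mu_j\,S_j\otimes z_j = \lambda_j T_j\otimes y_j$ for every $j$, so $A=\sum_j \mu_j S_j\otimes z_j$ is again a representation of the form \eqref{eq:hn}, with $(\mu_j)\in\ell_1$, $(S_j)\in\ell_\infty(\mathcal{L}(E_1,\ldots,E_n))=\ell_\infty^w(\mathcal{L}(E_1,\ldots,E_n))$ and $(z_j)\in\ell_\infty(F)$. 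By construction $\|(S_j)\|_{w,\infty}=1=\|(z_j)\|_\infty$ and $\|(\mu_j)\|_1=\sum_j|\lambda_j|\|T_j\|\|y_j\|$, so
$$\|A\|_{\mathcal{L}_{\mathcal{HN}}}\le \|(\mu_j)\|_1\cdot\|(S_j)\|_{w,\infty}\cdot\|(z_j)\|_\infty = \sum_{j=1}^\infty|\lambda_j|\cdot\|T_j\|\cdot\|y_j\|.$$
Taking the infimum over representations gives $\|A\|_{\mathcal{L}_{\mathcal{HN}}}\le N(A)$, which combined with the first step yields the desired equality.

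There is really no hard step here; the only point that requires a little care is the bookkeeping for vanishing terms (where one of $\lambda_j,T_j,y_j$ is zero), which is handled by simply omitting those indices since the corresponding rank-one summand is zero. The rest is a one-line normalization identical in spirit to the classical proof that the nuclear norm of a linear operator may be computed from unnormalized nuclear representations.
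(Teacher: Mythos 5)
Your proof is correct and follows essentially the same route as the paper's: the easy inequality is the term-by-term estimate $\sum_j|\lambda_j|\|T_j\|\|y_j\|\le\|(\lambda_j)\|_1\|(T_j)\|_{w,\infty}\|(y_j)\|_\infty$, and the reverse one is the same normalization $\mu_j=\lambda_j\|T_j\|\|y_j\|$, $S_j=T_j/\|T_j\|$, $z_j=y_j/\|y_j\|$ used in the paper (which likewise assumes the vanishing terms have been discarded). No gaps.
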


\begin{proof} Given a representation $\sum\limits_{j=1}^\infty \lambda_j T_j \otimes y_j$ of $A$ as in (\ref{eq:hn}), we can assume $T_j\neq0$ and $y_j\neq0$ for every $j\in\mathbb{N}$. Defining the sequences $(\mu_j)_{j=1}^\infty\subseteq\mathbb{K}$, $(S_j)_{j=1}^\infty\subseteq\mathcal{L}(E_1,\ldots,E_n)$ and $(z_j)_{j=1}^\infty\subseteq F$ by $$\mu_j=\lambda_j\|T_j\|\cdot\|y_j\|,\ S_j=\frac{T_j}{\|T_j\|}\ ~\mbox{and}~\ z_j=\frac{y_j}{\|y_j\|},$$ it is easy to check that $\sum\limits_{j=1}^\infty\mu_j S_j\otimes z_j$ is a representation of $A$ as in (\ref{eq:hn}) as well. Then $$\|A\|_{\mathcal{L}_{\mathcal{HN}}} \leq \|(\mu_j)_{j=1}^\infty\|_{1}\cdot\|(S_j)_{j=1}^\infty\|_{\infty}\cdot\|(z_j)_{j=1}^\infty\|_\infty= \sum\limits_{j=1}^\infty|\lambda_j|\cdot\|T_j\|\cdot\|y_j\|.$$ Taking the infimum of over all representations of $A$ we obtain one inequality. The reverse inequality follows immediately from the definition of the hyper-nuclear norm.\end{proof}

\begin{teo}\label{hnmb}The class $(\mathcal{L}_{\mathcal{HN}},\|\cdot\|_{\mathcal{L}_{\mathcal{HN}}})$ of hyper-nuclear multilinear operators is the smallest Banach hyper-ideal, in sense that if  $(\mathcal{H},\|\cdot\|_{\mathcal{H}})$ is a Banach hyper-ideal, then $\mathcal{L}_{\mathcal{HN}}\subseteq \mathcal{H}$ and $\|\cdot\|_{\mathcal{H}}\le\|\cdot\|_{\mathcal{L}_{\mathcal{HN}}}$.\end{teo}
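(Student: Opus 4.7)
The plan is to fix an arbitrary Banach hyper-ideal $(\mathcal{H},\|\cdot\|_{\mathcal{H}})$ and show directly, from the definition of hyper-nuclearity, that any $A \in \mathcal{L}_{\mathcal{HN}}(E_1,\ldots,E_n;F)$ lies in $\mathcal{H}(E_1,\ldots,E_n;F)$ with $\|A\|_{\mathcal{H}} \leq \|A\|_{\mathcal{L}_{\mathcal{HN}}}$. The strategy is to absorb each summand of a hyper-nuclear representation into $\mathcal{H}$ using Proposition \ref{ndeshi}, and then assemble the series via the series criterion (Theorem \ref{cs}) applied to $\mathcal{H}$.

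Fix a representation $A = \sum_{j=1}^\infty \lambda_j\, T_j \otimes y_j$ as in (\ref{eq:hn}), with $(\lambda_j) \in \ell_1$, $(T_j)$ bounded in $\mathcal{L}(E_1,\ldots,E_n)$ and $(y_j)$ bounded in $F$. For each $j$, Proposition \ref{ndeshi} (applied with $n=1$, the single ``multilinear block'' being $T_j$, target vector $y_j$) ensures $T_j \otimes y_j \in \mathcal{H}(E_1,\ldots,E_n;F)$ with $\|T_j \otimes y_j\|_{\mathcal{H}} = \|T_j\| \cdot \|y_j\|$. Hence $\lambda_j\, T_j \otimes y_j \in \mathcal{H}$ and
$$\sum_{j=1}^\infty \|\lambda_j\, T_j \otimes y_j\|_{\mathcal{H}} = \sum_{j=1}^\infty |\lambda_j|\cdot \|T_j\|\cdot \|y_j\| \leq \|(\lambda_j)\|_1 \cdot \|(T_j)\|_\infty \cdot \|(y_j)\|_\infty < \infty.$$

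Now I invoke Theorem \ref{cs}(ii) for the Banach hyper-ideal $\mathcal{H}$ (i.e.\ $p=1$): the series $\sum_{j=1}^\infty \lambda_j\, T_j \otimes y_j$ converges in the $\|\cdot\|_{\mathcal{H}}$-norm to some $B \in \mathcal{H}(E_1,\ldots,E_n;F)$ with $\|B\|_{\mathcal{H}} \leq \sum_{j=1}^\infty |\lambda_j|\cdot \|T_j\|\cdot \|y_j\|$. Since (\ref{deshi}) gives $\|\cdot\| \leq \|\cdot\|_{\mathcal{H}}$, the $\|\cdot\|_{\mathcal{H}}$-convergence implies uniform convergence, hence pointwise convergence, so $B = A$. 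Taking the infimum over all hyper-nuclear representations of $A$ and invoking Lemma \ref{lemahn}, I conclude $A \in \mathcal{H}(E_1,\ldots,E_n;F)$ and $\|A\|_{\mathcal{H}} \leq \|A\|_{\mathcal{L}_{\mathcal{HN}}}$.

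There is no real obstacle here: the argument is a three-step chain (rank-one building blocks are in every hyper-ideal by Proposition \ref{ndeshi}; the series assembles via the series criterion; completeness of $\mathcal{H}$ and the uniform-norm bound identify the $\mathcal{H}$-limit with $A$). The only subtlety worth highlighting is the use of Lemma \ref{lemahn}, without which one would only get an estimate in terms of $\|(\lambda_j)\|_1 \cdot \|(T_j)\|_\infty \cdot \|(y_j)\|_\infty$ rather than $\sum |\lambda_j|\cdot\|T_j\|\cdot\|y_j\|$; the lemma guarantees these two infima coincide, which is precisely what makes $\|\cdot\|_{\mathcal{L}_{\mathcal{HN}}}$ the right comparison norm.
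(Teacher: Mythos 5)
Your proof is correct and follows essentially the same route as the paper: both arguments reduce $A$ to the blocks $\lambda_j T_j\otimes y_j$, place them in $\mathcal{H}$ with $\|T_j\otimes y_j\|_{\mathcal{H}}=\|T_j\|\cdot\|y_j\|$ via Proposition \ref{ndeshi}, sum the series using completeness of $\mathcal{H}$ (the paper writes out the Cauchy partial-sum argument where you invoke Theorem \ref{cs}(ii), which is the same mechanism), identify the limit with $A$ through (\ref{deshi}), and finish with Lemma \ref{lemahn}.
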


\begin{proof}Let $(\mathcal{H},\|\cdot\|_\mathcal{H})$ be a Banach hyper-ideal. Let $A\in\mathcal{L}_{\mathcal{HN}}(E_1,\ldots,E_n;F)$ and let $\sum\limits_{j=1}^\infty\lambda_jT_j\otimes y_j$ be a representation of $A$ as in (\ref{eq:hn}). Given $\varepsilon>0$, let $k_0\in\mathbb{N}$ be such that \begin{equation}\sum\limits_{j=k}^\infty|\lambda_j|\cdot\|T_j\|\cdot\|y_j\|<
\varepsilon\label{eq:estima}\end{equation}
for every $k\ge k_0$. As $\cal H$ is a hyper-ideal, calling on Theorem \ref{mhi} we have
$$B_k:=\sum\limits_{j=1}^k\lambda_jT_j\otimes y_j \in \mathcal{L}_{\cal F}(E_1,\ldots,E_n;F) \subseteq \mathcal{H}(E_1,\ldots,E_n;F)$$
for every $k$. For all $k>i\ge k_0$, Proposition \ref{ndeshi} gives $$\|B_k-B_i\|_\mathcal{H}=\an\sum\limits_{j=i+1}^k\lambda_jT_j\otimes y_j\fn_\mathcal{H}\le \sum\limits_{j=i+1}^k|\lambda_j|\cdot\|T_j\otimes y_j\|_\mathcal{H}=\sum\limits_{j=i+1}^k|\lambda_j|\cdot\|T_j\|\cdot\|y_j\|<\varepsilon,$$ showing that $(B_k)_{k=1}^\infty$ is a Cauchy sequence in the Banach space $\mathcal{H}(E_1,\ldots,E_n;F)$. Then there is $B\in\mathcal{H}(E_1,\ldots,E_n;F)$ such that $B_k \stackrel{\|\cdot\|_{\cal H}}{\longrightarrow} B$. From (\ref{deshi}), $\|\cdot\|\le\|\cdot\|_\mathcal{H}$, so $B_k \stackrel{\|\cdot\|}{\longrightarrow} B$. By (\ref{eq:estima}) it follows easily that $B_k \stackrel{\|\cdot\|}{\longrightarrow} A$, thus $A=B$. Hence $A\in\mathcal{H}(E_1,\ldots,E_n;F)$, proving that $\mathcal{L}_{\mathcal{HN}}\subseteq\mathcal{H}$.
Using Proposition \ref{ndeshi} once again,  $$\|A\|_{\mathcal{H}}\le \sum\limits_{j=1}^\infty\|\lambda_jT_j\otimes y_j\|_{\mathcal{H}}=\sum\limits_{j=1}^\infty|\lambda_j|\cdot \|T_j\otimes y_j\|_{\cal H}=\sum\limits_{j=1}^\infty|\lambda_j|\cdot \|T_j\|\cdot\| y_j\|.$$ Taking the infimum over all hyper-nuclear representations of $A$, from Lemma \ref{lemahn} it follows that $\|A\|_{\mathcal{H}}\le\|A\|_{\mathcal{L}_{\mathcal{HN}}}$.\end{proof}

\section{Composition ideals and multilinearly sequentially continuous operators}\label{ptp}

This section has a twofold purpose. First, using the notion of composition ideals, we show that many important multi-ideals are hyper-ideals, for example the classes of compact and weakly compact multilinear operators.  Second, we introduce the class of multilinearly sequentially continuous operators as a response, in the realm of hyper-ideals, to the fact that the multi-ideal of weakly sequentially continuous multilinear operators fails to be a hyper-ideal.

\begin{definition}[Composition ideals]\rm Given an operator ideal $\mathcal{I}$,  an $n$-linear operator $A \in \mathcal{L}(E_1,\ldots,E_n;F)$ belongs to the {\it composition ideal} ${\cal I} \circ {\cal L}$, in symbols,  $A \in \mathcal{I}\circ\mathcal{L}(E_1,\ldots,E_n;F)$, if there exist a Banach space $G$, a linear operator  $u\in\mathcal{I}(G;F)$ and an $n$-linear operator  $B \in \mathcal{L}(E_1,\ldots, E_n;G)$ such that $A=u\circ B.$ If $(\mathcal{I}, \|\cdot\|_{\cal I})$ is a $p$-normed operator ideal, $0 < p \leq 1$, we define $$\|A\|_{\mathcal{I}\circ\mathcal{L}}=\inf\{\|u\|_{\mathcal{I}}\cdot \|B\|\},$$ where the infimum is taken over all factorizations $A=u\circ B$ with $u$ belonging to $\mathcal{I}$.\label{idcomp}\end{definition}

It is well known that $({\cal I}\circ {\cal L}, \|\cdot\|_{{\cal I}\circ {\cal L}})$ is a $p$-normed ($p$-Banach) multi-ideal whenever $(\mathcal{I}, \|\cdot\|_{\cal I})$ is a $p$-normed ($p$-Banach) operator ideal. Details can be found in \cite{bpr2}.

\begin{teo}\label{hicom}If $\mathcal{I}$ is an operator ideal, then $\mathcal{I}\circ\mathcal{L}$ is a hyper-ideal. If $(\mathcal{I},\|\cdot\|_{\mathcal{I}})$ is a $p$-normed ($p$-Banach) operator ideal, then $(\mathcal{I}\circ\mathcal{L},\|\cdot\|_{\mathcal{I}\circ\mathcal{L}})$ is a $p$-normed ($p$-Banach) hyper-ideal. In particular, if $\mathcal{I}$ is a closed operator ideal, then $\mathcal{I}\circ\mathcal{L}$ is a closed hyper-ideal.\end{teo}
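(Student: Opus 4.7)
Since $\mathcal{I}\circ\mathcal{L}$ is already known to be a ($p$-normed, $p$-Banach) multi-ideal by \cite{bpr2}, the basic axioms in Definition \ref{dhi}---the $p$-normed structure on each component, the containment of finite type operators, the normalization $\|I_n\|_{\mathcal{I}\circ\mathcal{L}}=1$, and completeness when $\mathcal{I}$ is $p$-Banach---come for free. The plan is therefore to focus on what is genuinely new: upgrading the multi-ideal property to the hyper-ideal property, proving the corresponding hyper-ideal inequality, and finally establishing the ``closed'' assertion.

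For the hyper-ideal property, fix $A\in\mathcal{I}\circ\mathcal{L}(E_1,\ldots,E_n;F)$ with a factorization $A=u\circ B$, $u\in\mathcal{I}(G;F)$, $B\in\mathcal{L}(E_1,\ldots,E_n;G)$, and take $B_1,\ldots,B_n$ and $t$ as in Definition \ref{dhi}(2). The crux is the associativity identity
$$t\circ A\circ(B_1,\ldots,B_n)=(t\circ u)\circ\bigl[B\circ(B_1,\ldots,B_n)\bigr],$$
where $t\circ u\in\mathcal{I}(G;H)$ by the ideal property of $\mathcal{I}$ and $B\circ(B_1,\ldots,B_n)\in\mathcal{L}(G_1,\ldots,G_{m_n};G)$ is a continuous $m_n$-linear operator of uniform norm at most $\|B\|\cdot\|B_1\|\cdots\|B_n\|$. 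This exhibits $t\circ A\circ(B_1,\ldots,B_n)$ as an element of $\mathcal{I}\circ\mathcal{L}(G_1,\ldots,G_{m_n};H)$. For the norm, the same factorization gives
$$\|t\circ A\circ(B_1,\ldots,B_n)\|_{\mathcal{I}\circ\mathcal{L}}\le\|t\circ u\|_{\mathcal{I}}\cdot\|B\circ(B_1,\ldots,B_n)\|\le\|t\|\cdot\|u\|_{\mathcal{I}}\cdot\|B\|\cdot\|B_1\|\cdots\|B_n\|,$$
and taking the infimum over all admissible factorizations $A=u\circ B$ replaces $\|u\|_{\mathcal{I}}\cdot\|B\|$ by $\|A\|_{\mathcal{I}\circ\mathcal{L}}$, yielding the hyper-ideal inequality \eqref{eqhi}.

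For the ``in particular'' claim, what remains is to verify that each component $\mathcal{I}\circ\mathcal{L}(E_1,\ldots,E_n;F)$ is closed in $\mathcal{L}(E_1,\ldots,E_n;F)$ under the uniform norm. The cleanest route is via the projective tensor product $X=E_1\widehat{\otimes}_\pi\cdots\widehat{\otimes}_\pi E_n$, whose canonical $n$-linear map $\sigma\colon E_1\times\cdots\times E_n\to X$ induces the isometric isomorphism $A\leftrightarrow A_L$ between $\mathcal{L}(E_1,\ldots,E_n;F)$ and $\mathcal{L}(X;F)$. A factorization $A=u\circ B$ linearizes to $A_L=u\circ B_L$ with $B_L$ linear, so $A_L\in\mathcal{I}(X;F)$ by the ideal property of $\mathcal{I}$; conversely, $A_L\in\mathcal{I}(X;F)$ gives the factorization $A=A_L\circ\sigma\in\mathcal{I}\circ\mathcal{L}$. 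Thus the linearization identifies $\mathcal{I}\circ\mathcal{L}(E_1,\ldots,E_n;F)$ isometrically with $\mathcal{I}(X;F)$, and closedness of $\mathcal{I}$ in the uniform norm transfers directly to $\mathcal{I}\circ\mathcal{L}$.

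The first two parts are formal manipulations with factorizations and should present no real difficulty. The only mild obstacle is the tensor-product identification in the closed case: one must invoke the universal property of $\widehat{\otimes}_\pi$ and the right-ideal property of $\mathcal{I}$ to match composition factorizations on the multilinear side with genuine membership of the linearization in $\mathcal{I}$. Once that bridge is in place, the whole theorem reduces to standard operator-ideal reasoning.
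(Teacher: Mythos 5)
Your proposal is correct and follows essentially the same route as the paper: the new content is the hyper-ideal property, which both you and the authors obtain by re-associating the factorization as $t\circ A\circ(B_1,\ldots,B_n)=(t\circ u)\circ\bigl[B\circ(B_1,\ldots,B_n)\bigr]$ and then taking the infimum over factorizations for the norm inequality. The only divergence is the ``closed'' assertion, which the paper dispatches by citing \cite[Corollary 3.8]{bpr2}, whereas you supply a self-contained (and correct) proof via the isometric identification $\mathcal{I}\circ\mathcal{L}(E_1,\ldots,E_n;F)\cong\mathcal{I}(E_1\widehat{\otimes}_\pi\cdots\widehat{\otimes}_\pi E_n;F)$ -- a harmless elaboration of the same standard fact.
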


\begin{proof} As we know that $\mathcal{I}\circ\mathcal{L}$ is a ($p$-normed, $p$-Banach) multi-ideal, all that is left to be checked is the hyper-ideal property.
Let $B_1\in \mathcal{L}(G_1,\ldots, G_{m_1};E_1), \ldots,\ B_n\in \mathcal{L}(G_{m_{n-1}+1},\ldots, G_{m_n};E_n)$, where $m_1<\cdots<m_n$,
$A \in \mathcal{I}\circ\mathcal{L}(E_1,\ldots, E_n;F)$ and $t \in \mathcal{L}(F;H)$. We can write $A=v\circ C$, where $C\in \mathcal{L}(E_1,\ldots, E_n;F_1)$ and $v\in \mathcal{I}(F_1;F)$. Defining $u:=t\circ v \in \mathcal{I}(F_1;H)$ and $D:=C\circ(B_1,\ldots,B_n)\in \mathcal{L}(G_1,\ldots,G_{m_n};F_1)$, it follows that
$$t\circ A\circ (B_1,\ldots,B_n)=t\circ(v\circ C)\circ(B_1,\ldots,B_n)=(t\circ v)\circ (C\circ(B_1,\ldots,B_n))=u\circ D,$$
which proves that $t\circ A\circ (B_1,\ldots,B_n)\in \mathcal{I}\circ\mathcal{L}(G_1,\ldots,G_{m_n};H).$
Furthermore, \begin{eqnarray*}\|t\circ A\circ(B_1,\ldots,B_n)\|_{\mathcal{I}\circ\mathcal{L}}&=& \|(t\circ v)\circ (C\circ(B_1,\ldots,B_n))\|_{\mathcal{I}\circ\mathcal{L}} \\&\le&\|t\circ v\|_{\mathcal{I}}\cdot \|C\circ(B_1,\ldots,B_n)\|\\&\le& \|t\|\cdot\left[\|v\|_{\mathcal{I}}\cdot \|C\|\right]\cdot\|B_1\|\cdots\|B_n\|.\end{eqnarray*}
Taking the infimum over all possible factorizations we get $$\|t\circ A\circ(B_1,\ldots,B_n)\|_{\mathcal{I}\circ\mathcal{L}}\le \|t\|\cdot\|A\|_{\mathcal{I}\circ\mathcal{L}}\cdot\|B_1\|\cdots\|B_n\|.$$

The last assertion follows from the former ones and \cite[Corollary 3.8]{bpr2}.\end{proof}

\begin{ex}\rm A multilinear operator $A \in \mathcal{L}(E_1,\ldots,E_n;F)$ is said to be {\it compact} ({\it weakly compact}) if $A(B_{E_1}, \ldots, B_{E_n})$ is a relatively compact (relatively weakly compact) subset of $F$. By $\cal K$ and $\cal W$ we denote the closed ideals of compact and weakly compact operators, and by ${\cal L}_{\cal K}$ and ${\cal L}_{\cal W}$ the classes of compact and weakly compact multilinear operators.  Pe{\l}czy\'nski \cite{pelczynski} showed that $$\mathcal{L}_{\mathcal{K}}=\mathcal{K}\circ\mathcal{L} {\rm ~~ and~~} \mathcal{L}_{\mathcal{W}}=\mathcal{W}\circ\mathcal{L}.$$
By Theorem \ref{hicom} the classes of compact and weakly compact multilinear operators are closed hyper-ideals.\end{ex}

Let us take a look at another well studied closed multi-ideal (see, e.g., \cite{arondimant,boydryan,cgg}):

\begin{ex}\label{Exwsc}\rm A multilinear operator $A\in \mathcal{L}(E_1,\ldots,E_n;F)$ is said to be {\it
weakly sequentially continuous} if $A(x_j^{1},\ldots,x_j^{n})\longrightarrow A(x_1,\ldots,x_n)$ in the norm of $F$ whenever $(x_j^{l})_{j=1}^\infty$ is weakly convergent to $x_l$ in $E_l$, $l=1,\ldots,n$. The class of
weakly sequently continuous operators is denoted by $\mathcal{L}_{wsc}$.  As continuous linear operators are weak-to-weak continuous,
it is immediate that $\mathcal{L}_{wsc}$ is a multi-ideal. It is not difficult to prove that it is a closed multi-ideal.  On the other hand, let us see that $\mathcal{L}_{wsc}$ is not a hyper-ideal. In fact, as the sequence  $(e_i)_{i=1}^\infty$ of canonical unit vectors is weakly null in $\ell_2$ and non-norm null in $\ell_1$, the continuous bilinear operator
$$A \colon \ell_2\times\ell_2\longrightarrow \ell_1~,~A((x_j)_{j=1}^\infty,(y_j)_{j=1}^\infty)=(x_jy_j)_{j=1}^\infty,$$
fails to be weakly sequentially continuous. If $\mathcal{L}_{wsc}$ were a hyper-ideal, by the Schur property of $\ell_1$ we would have $A=Id_{\ell_1}\circ A\in\mathcal{L}_{wsc}(^2\ell_2;\ell_1).$ This contradiction shows that $\mathcal{L}_{wsc}$ fails the hyper-ideal property.\end{ex}

Now we need a closed hyper-ideal to play the role $\mathcal{L}_{wsc}$ plays the in context of multi-ideals. Let $\cal CC$ be the closed ideal of completely continuous linear operators (weakly convergent sequences are sent to norm convergent ones). If we just think of a closed hyper-ideal $\cal H$ whose linear component is $\cal CC$, that is ${\cal H}^1 = {\cal CC}$, then the composition ideal ${\cal CC} \circ {\cal L}$ arises as a natural candidate. Let us stress that ${\cal CC} \circ {\cal L}$ is {\it not} a good solution. First, the definition of multilinear operators belonging to ${\cal CC} \circ {\cal L}$ is not a multilinear analogue of the definition of linear operators belonging to $\cal CC$. Second, ${\cal CC} \circ {\cal L}$ seems to be  rather {\it small}. For example, coincidence situations are extremely rare for ${\cal CC} \circ {\cal L}$: reasoning as in \cite[Proposition 3.3]{bp} we have
\begin{center} ${\cal CC} \circ {\cal L}(E_1, \ldots, E_n;F) = {\cal L}(E_1, \ldots, E_n;F)$  for every $ F$ if and only if the completed projective tensor product $E_1\widehat{\otimes}_\pi\cdots\widehat{\otimes}_\pi E_n$ has the Schur property.
\end{center}
The Schur property on projective tensor products is a complete mystery. The following long standing problem (cf. \cite{bp, ggcambridge}) unfolds that we know nothing about it:

\medskip

\noindent {\bf Open Question 1.} Does $E\widehat{\otimes}_\pi F$ have the Schur property whenever $E$ and $F$ have the Schur property?

\medskip

So, we believe there is room for a closed hyper-ideal that {\it replicates} in the multilinear setting the essence of completely continuous linear operators and that is, at least formally, larger than ${\cal CC} \circ {\cal L}$. Here is our proposal:

\begin{definition}\rm Let $E_1, \ldots, E_n$ be Banach spaces. We say that a sequence $((x_j^{1},\ldots,x_j^{n}))_{j=1}^\infty$ in $E_1\times\cdots\times E_n$ {\it converges multilinearly} to $(x_1,\ldots,x_n) \in E_1\times\cdots\times E_n$ if  $T(x_j^{1},\ldots,x_j^{n})\longrightarrow T(x_1,\ldots,x_n)$ for every $n$-linear form $T\in \mathcal{L}(E_1,\ldots, E_n)$.\\
\indent We say that an operator $A\in \mathcal{L}(E_1,\ldots,E_n;F)$ is {\it multilinearly sequentially continuous} if $A(x_j^{1},\ldots,x_j^{n})\longrightarrow A(x_1,\ldots,x_n)$ in the norm of $F$ whenever $((x_j^{1},\ldots,x_j^{n}))_{j=1}^\infty$ converges multilinearly to $(x_1,\ldots,x_n)$. In this case we write $A\in \mathcal{L}_{msc}(E_1,\ldots,E_n;F)$.\end{definition}

\indent It is worth noting that multilinear convergence is no regular convergence. For example, let us see that the limit is not unique in general and that multilinear convergence does not imply coordinatewise boundedness:
\begin{ex}\rm Let $E$ be a Banach space and $0 \neq x \in E$. For every bilinear form $T \in {\cal L}(^2E)$,
$$T\left(jx, \frac{x}{j^2}\right) = \frac1j T(x,x) \stackrel{j}{\longrightarrow} 0 = T(0,y) = T(y,0),$$
for every $y \in E$. Thus, $(jx, \frac{x}{j^2})_{j=1}^\infty$ converges bilinearly to $(0,y)$ and to $(y,0)$ for every $y \in E$. Besides, the sequence $(jx)_{j=1}^\infty$ is unbounded in $E$.\end{ex}

Fortunately, for our purposes the following boundedness is sufficient:

\begin{lema}\label{ultimolema}Suppose that $((x_j^{1},\ldots,x_j^{n}))_{j=1}^\infty\subseteq E_1\times\cdots\times E_n$ converges multilinearly to $(x_1, \ldots, x_n)$. Then there exists $K>0$ such that $\|x_j^{1}\|\cdots\|x_j^{n}\|\leq K$ for every $j\in\mathbb{N}$ and $\|x_1\|\cdots\|x_n\|\leq K$.\end{lema}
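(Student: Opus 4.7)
The plan is to apply the Banach--Steinhaus (uniform boundedness) theorem to the naturally associated family of continuous linear functionals on the Banach space $\mathcal{L}(E_1,\ldots,E_n)$. For each $j \in \mathbb{N}$, define
$$\Phi_j \colon \mathcal{L}(E_1,\ldots,E_n) \longrightarrow \mathbb{K}, \qquad \Phi_j(T) = T(x_j^{1},\ldots,x_j^{n}).$$
Each $\Phi_j$ is linear, and from $|T(x_j^{1},\ldots,x_j^{n})| \leq \|T\|\cdot\|x_j^{1}\|\cdots\|x_j^{n}\|$ it is continuous with $\|\Phi_j\| \leq \|x_j^{1}\|\cdots\|x_j^{n}\|$. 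By the multilinear convergence hypothesis, for every $T\in\mathcal{L}(E_1,\ldots,E_n)$ the sequence $(\Phi_j(T))_{j=1}^\infty$ converges, and is therefore bounded.

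Now I would pin down the norm of $\Phi_j$ precisely. Given $\varphi_i \in E_i'$ with $\|\varphi_i\|\leq 1$ for $i=1,\ldots,n$, the $n$-linear form $T=\varphi_1\otimes\cdots\otimes\varphi_n$ belongs to $\mathcal{L}(E_1,\ldots,E_n)$ with $\|T\|\leq 1$ and
$$|\Phi_j(T)| = |\varphi_1(x_j^{1})|\cdots|\varphi_n(x_j^{n})|.$$
By the Hahn--Banach theorem, for each $i$ the supremum of $|\varphi_i(x_j^{i})|$ over $\|\varphi_i\|\leq 1$ equals $\|x_j^{i}\|$; taking the suprema independently coordinate by coordinate yields
$$\|\Phi_j\| \geq \|x_j^{1}\|\cdots\|x_j^{n}\|,$$
so equality holds. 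The uniform boundedness principle, applied to the pointwise bounded family $(\Phi_j)_{j=1}^\infty$ on the Banach space $\mathcal{L}(E_1,\ldots,E_n)$, then gives
$$K := \sup_{j\in\mathbb{N}} \|\Phi_j\| = \sup_{j\in\mathbb{N}} \|x_j^{1}\|\cdots\|x_j^{n}\| < \infty,$$
which is the first half of the conclusion.

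For the limit point, I would use that the pointwise limit $\Phi\colon T\mapsto T(x_1,\ldots,x_n)$ is continuous with $\|\Phi\|\leq \liminf_j \|\Phi_j\|\leq K$ (a standard consequence of Banach--Steinhaus), and then compute $\|\Phi\| = \|x_1\|\cdots\|x_n\|$ by the same Hahn--Banach argument applied to $(x_1,\ldots,x_n)$. The only mildly delicate step is the exact computation of $\|\Phi_j\|$ using elementary tensors of norm-one functionals; everything else is a direct application of Banach--Steinhaus, so I do not anticipate a serious obstacle.
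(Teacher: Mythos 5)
Your proof is correct, and it reaches the conclusion by a more self-contained route than the paper. The paper's proof passes through the projective tensor product: it uses the universal property to identify $\mathcal{L}(E_1,\ldots,E_n)$ with $(E_1\widehat{\otimes}_\pi\cdots\widehat{\otimes}_\pi E_n)'$, observes that multilinear convergence is exactly weak convergence of the elementary tensors $x_j^{1}\otimes\cdots\otimes x_j^{n}$ to $x_1\otimes\cdots\otimes x_n$, and then cites the standard facts that weakly convergent sequences are bounded and that the norm of a weak limit is at most the $\liminf$ of the norms, together with $\pi(x_1\otimes\cdots\otimes x_n)=\|x_1\|\cdots\|x_n\|$. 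Your argument is the same idea with the tensor-product formalism unpacked: your functional $\Phi_j$ is precisely the canonical image of $x_j^{1}\otimes\cdots\otimes x_j^{n}$ in the bidual of the projective tensor product, your Hahn--Banach computation of $\|\Phi_j\|$ is the standard proof that $\pi$ is a cross-norm on elementary tensors, and your direct appeal to Banach--Steinhaus is the proof of the boundedness of weakly convergent sequences specialized to this situation. What you gain is a proof that needs no external references and no tensor products at all --- only that $\mathcal{L}(E_1,\ldots,E_n)$ is a Banach space, uniform boundedness, and Hahn--Banach; what the paper's formulation buys is brevity and a conceptual reading of multilinear convergence as weak convergence in $E_1\widehat{\otimes}_\pi\cdots\widehat{\otimes}_\pi E_n$, a viewpoint it exploits again in the surrounding discussion of the Schur property of projective tensor products. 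One tiny economy: you only need the lower bound $\|\Phi_j\|\geq\|x_j^{1}\|\cdots\|x_j^{n}\|$ (plus continuity of $\Phi_j$), not the exact equality, though establishing equality costs nothing extra.
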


\begin{proof}Let $\varphi\in (E_1\widehat{\otimes}_\pi\cdots\widehat{\otimes}_\pi E_n)'$. By the universal property of the projective tensor product (see, e.g., \cite[Theorem 2.9]{ryan}), there is an $n$-linear form $B\in\mathcal{L}(E_1, \ldots, E_n)$ such that $B(x_1, \ldots, x_n) = \varphi(x_1 \otimes \cdots \otimes x_n)$ for all $x_1 \in E_1, \ldots, x_n \in E_n$. From the multilinear convergence of $((x_j^{1},\ldots,x_j^{n}))_{j=1}^\infty$ we have \begin{eqnarray*}\varphi(x_j^{1}\otimes\cdots\otimes x_j^{n})=B(x_j^{1},\ldots,x_j^{n})\longrightarrow B(x_1,\ldots,x_n)=\varphi(x_1\otimes\cdots\otimes x_n).\end{eqnarray*} Thus $(x_j^{1}\otimes\cdots\otimes x_j^{n})_{j=1}^\infty$ is weakly convergent to  $x_1\otimes\cdots\otimes x_n$ in $E_1\widehat{\otimes}_\pi\cdots\widehat{\otimes}_\pi E_n$. By \cite[Proposition 3.5(iii)]{brezis} there exists $K>0$ such that $$\|x_j^{1}\|\cdots\|x_j^{n}\|=\pi(x_j^{1}\otimes\cdots\otimes x_j^{n})\le K$$ for every $j\in\mathbb{N}$, where $\pi(z)$ denotes the projective norm of $z \in E_1\widehat{\otimes}_\pi\cdots\widehat{\otimes}_\pi E_n$, and $$ \|x_1\| \cdots \|x_n\| = \pi(x_1 \otimes \cdots \otimes x_n) \leq \liminf_j  \|x_j^{1}\|\cdots\|x_j^{n}\| \leq K. $$\end{proof}

\begin{teo}\label{propmsc} The class $\mathcal{L}_{msc}$ of multilinearly sequentially continuous operators is a closed hyper-ideal, ${\cal CC} = {\cal L}_{msc}^1$ and $\mathcal{CC}\circ\mathcal{L}\subseteq\mathcal{L}_{msc}$.\end{teo}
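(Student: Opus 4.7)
My plan is to verify in turn (a) that $\mathcal{L}_{msc}$ is a hyper-ideal, (b) that it is closed in the uniform norm, and (c) the two identifications involving $\mathcal{CC}$. The hyper-ideal property is the substantive step; the rest are quick.

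For (a), each component $\mathcal{L}_{msc}(E_1,\ldots,E_n;F)$ is easily seen to be a linear subspace of $\mathcal{L}(E_1,\ldots,E_n;F)$ (sums and scalar multiples respect norm convergence). It contains the finite-type operators: for $\varphi_1\otimes\cdots\otimes\varphi_n\otimes y$, the scalar form $\varphi_1\otimes\cdots\otimes\varphi_n$ lies in $\mathcal{L}(E_1,\ldots,E_n)$, so multilinear convergence of the inputs yields scalar convergence, and multiplying by the fixed vector $y$ gives norm convergence in $F$. For the hyper-ideal property, fix $A\in\mathcal{L}_{msc}(E_1,\ldots,E_n;F)$, operators $B_1,\ldots,B_n,t$ as in Definition \ref{dhi}(2), and a sequence $((g_j^1,\ldots,g_j^{m_n}))_j$ converging multilinearly to $(g_1,\ldots,g_{m_n})$ in $G_1\times\cdots\times G_{m_n}$. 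The crucial observation is that multilinear composition transports multilinear convergence: for any $T\in\mathcal{L}(E_1,\ldots,E_n)$ the composite $T\circ(B_1,\ldots,B_n)$ belongs to $\mathcal{L}(G_1,\ldots,G_{m_n})$, so the hypothesis gives $T(B_1(g_j^1,\ldots,g_j^{m_1}),\ldots,B_n(g_j^{m_{n-1}+1},\ldots,g_j^{m_n}))\longrightarrow T(B_1(g_1,\ldots,g_{m_1}),\ldots,B_n(g_{m_{n-1}+1},\ldots,g_{m_n}))$. Thus the $E$-tuples converge multilinearly, so since $A$ is msc the images converge in the norm of $F$; applying the bounded linear $t$ preserves the convergence.

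For (b), suppose $A_k\to A$ in uniform norm with each $A_k\in\mathcal{L}_{msc}$, and let $((x_j^l))_j$ converge multilinearly to $(x_l)$. By Lemma \ref{ultimolema} there is $K>0$ with $\|x_j^1\|\cdots\|x_j^n\|\le K$ for all $j$ and $\|x_1\|\cdots\|x_n\|\le K$. A standard $3\varepsilon$-estimate — first choose $k$ so that $\|A-A_k\|<\varepsilon/(3K)$, which controls the outer two terms of the triangle inequality uniformly in $j$, and then invoke $A_k\in\mathcal{L}_{msc}$ to handle the middle term for large $j$ — shows $A(x_j^1,\ldots,x_j^n)\longrightarrow A(x_1,\ldots,x_n)$ in norm.

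For (c), when $n=1$ multilinear convergence is precisely convergence against every $\varphi\in E'$, i.e., weak convergence, so $\mathcal{L}_{msc}^1(E;F)$ coincides with the class of linear operators sending weakly convergent sequences to norm-convergent ones, namely $\mathcal{CC}(E;F)$. For $\mathcal{CC}\circ\mathcal{L}\subseteq\mathcal{L}_{msc}$, write $A=u\circ B$ with $u\in\mathcal{CC}(G;F)$ and $B\in\mathcal{L}(E_1,\ldots,E_n;G)$. If $((x_j^l))_j$ converges multilinearly to $(x_l)$, then for every $\varphi\in G'$ the form $\varphi\circ B$ belongs to $\mathcal{L}(E_1,\ldots,E_n)$, hence $\varphi(B(x_j^1,\ldots,x_j^n))\longrightarrow\varphi(B(x_1,\ldots,x_n))$; this means $B(x_j^1,\ldots,x_j^n)\to B(x_1,\ldots,x_n)$ weakly in $G$, and complete continuity of $u$ delivers norm convergence of the $A$-images. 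The only place requiring real care is the hyper-ideal step: multilinear convergence is weak enough (no unique limits, no coordinatewise boundedness) that one could fear bad behaviour under composition, but the one-line observation that $T\circ(B_1,\ldots,B_n)\in\mathcal{L}(G_1,\ldots,G_{m_n})$ dispels this worry entirely.
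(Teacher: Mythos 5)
Your proposal is correct and follows essentially the same route as the paper: the hyper-ideal property via the observation that $T\circ(B_1,\ldots,B_n)\in\mathcal{L}(G_1,\ldots,G_{m_n})$ transports multilinear convergence, closedness via Lemma \ref{ultimolema} and the standard $3\varepsilon$ estimate, and the identifications with $\mathcal{CC}$ exactly as the paper indicates (you merely write out the ``standard'' steps the paper omits, such as the inclusion $\mathcal{CC}\circ\mathcal{L}\subseteq\mathcal{L}_{msc}$ and the containment of finite-type operators).
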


\begin{proof} The equality ${\cal CC} = {\cal L}_{msc}^1$ follows immediately from the definitions. The proofs that $\mathcal{CC}\circ\mathcal{L}\subseteq\mathcal{L}_{msc}$ and that any component $\mathcal{L}_{msc}(E_1,\ldots,E_n;F)$ is a linear subspace of $\mathcal{L}(E_1,\ldots,E_n;F)$ are standard and we omit them. So, $\mathcal{L}_{msc}$ contains the finite rank operators. As to the hyper-ideal property, let $1\le m_1<\cdots<m_n$, $B_1\in \mathcal{L}(G_1,\ldots, G_{m_1};E_1), \ldots$,  $B_n\in \mathcal{L}(G_{m_{n-1}+1},\ldots, G_{m_n};E_n)$, $A \in \mathcal{L}_{msc}(E_1,\ldots, E_m;F)$ and $t \in \mathcal{L}(F;H)$ be given. Suppose that the sequence $((x_j^{1},\ldots,x_j^{m_n}))_{j=1}^\infty$ in $G_1\times\cdots\times G_{m_n}$ converges multilinearly to $(x_1,\ldots,x_{m_n})$.  For every $n$-linear form $T\in \mathcal{L}(E_1,\ldots,E_n)$, as  $T\circ(B_1,\ldots,B_n)\in\mathcal{L}(G_1,\ldots,G_{m_n})$, we have
\begin{align*}
T\circ(B_1,\ldots,B_n)(x_j^{1},\ldots,x_j^{m_n})&=
T\circ(B_1(x_j^{1},\ldots,x_j^{m_1}),\ldots,B_n(x_j^{m_{n-1}+1},\ldots, x_j^{m_n}))\\& \longrightarrow T\circ(B_1,\ldots,B_n)(x_1,\ldots,x_{m_n})\\&=T(B_1(x_1,\ldots,x_{m_1}),\ldots,B_n(x_{m_{n-1}+1},\ldots, x_{m_n})).
 \end{align*} This shows that the sequence $((B_1(x_j^{1},\ldots,x_j^{m_1}),\ldots,B_n(x_j^{m_{n-1}+1},\ldots, x_j^{m_n})))_{j=1}^\infty$ converges multilinearly to $(B_1(x_1,\ldots,x_{m_1}),\ldots,B_n(x_{m_{n-1}+1},\ldots, x_{m_n}))$. Since  $A$ is multilinear sequentially continuous, the sequence $$(A(B_1(x_j^{1},\ldots,x_j^{m_1}),\ldots,B_n(x_j^{m_{n-1}+1},\ldots, x_j^{m_n})))_{j=1}^\infty$$ converges in the norm of $F$ to $$A(B_1(x_1,\ldots,x_{m_1}),\ldots,B_n(x_{m_{n-1}+1},\ldots, x_{m_n})).$$ As $t$ is continuous, $(t\circ A(B_1(x_j^{1},\ldots,x_j^{m_1}),\ldots,B_n(x_j^{m_{n-1}+1},\ldots, x_j^{m_n})))_{j=1}^\infty$ converges in the norm of $H$ to $$t\circ A(B_1(x_1,\ldots,x_{m_1}),\ldots,B_n(x_{m_{n-1}+1},\ldots, x_{m_n})),$$ proving that $t\circ A\circ(B_1,\ldots,B_n)\in \mathcal{L}_{msc}(G_1,\ldots, G_{m_n};H)$.\\
\indent Now we prove that $\mathcal{L}_{msc}(E_1,\ldots,E_n;F)$ is closed in $\mathcal{L}(E_1,\ldots,E_n;F)$. To do so let $(A_j)_{j=1}^\infty \subseteq \mathcal{L}_{msc}(E_1,\ldots,E_n;F)$ be such that $A_j \stackrel{\|\cdot\|}{\longrightarrow}A\in\mathcal{L}(E_1,\ldots,E_n;F)$. 
Suppose that the sequence $((x_k^{1},\ldots,x_k^{n}))_{k=1}^\infty$ converges multilinearly to $(x_1,\ldots,x_n)\in E_1\times\cdots\times E_n$. From Lemma \ref{ultimolema} there is $K>0$ such that $\|x_k^{1}\|\cdots\|x_k^{n}\|\le K$ for every $k\in\mathbb{N}$ and $\|x_1\|\cdots\|x_n\|\le K$. Given $\varepsilon>0$, choose $j_0 \in \mathbb{N}$ such that $\|A-A_{j_0}\|< \frac{\varepsilon}{3K}$. As $A_{j_0}\in\mathcal{L}_{msc}(E_1,\ldots,E_n;F)$, there exists $k_0\in\mathbb{N}$ such that $$\|A_{j_0}(x_k^{1},\ldots,x_k^{n})-A_{j_0}(x_1,\ldots,x_n)\|<\dfrac{\varepsilon}{3}$$ for every $k\ge k_0$. Then,
\begin{align*}\|A(x_k^{1},&\ldots,x_k^{n})-A(x_1,\ldots,x_n)\|\\
&\le\|A(x_k^{1},\ldots,x_k^{n})-A_{j_0}(x_k^{1},\ldots,x_k^{n})\| +\|A_{j_0}(x_k^{1},\ldots,x_k^{n})-A_{j_0}(x_1,\ldots,x_n)\|\\
&~~~+\|A_{j_0}(x_1,\ldots,x_n)-A(x_1,\ldots,x_n)\|\\
&\le\|A-A_{j_0}\|\cdot\|x_k^{1}\|\cdots\|x_k^{n}\| +\|A_{j_0}(x_k^{1},\ldots,x_k^{n})-A_{j_0}(x_1,\ldots,x_n)\|\\&~~~+ \|A_{j_0}-A\|\cdot\|x_1\|\cdots\|x_n\|\\
&< \dfrac{\varepsilon}{3K}\cdot\|x_k^{1}\|\cdots\|x_k^{n}\|+\dfrac{\varepsilon}{3}+ \dfrac{\varepsilon}{3K}\cdot\|x_1\|\cdots\|x_n\|\le\varepsilon\end{align*} for every $k\ge k_0$. Thus $A(x_k^{1},\ldots,x_k^{n})\longrightarrow A(x_1,\ldots,x_n)$ in the norm of $F$, which proves that $A\in\mathcal{L}_{msc}(E_1,\ldots,E_n;F)$.\end{proof}

Sometimes the classes $\mathcal{CC}\circ\mathcal{L}$ and $\mathcal{L}_{msc}$ coincide:
\begin{prop}\label{propps}If $E_1\widehat{\otimes}_\pi\cdots\widehat{\otimes}_\pi E_n$ has the Schur property,
then $$\mathcal{L}_{msc}(E_1,\ldots,E_n;F)= \mathcal{CC}\circ\mathcal{L}(E_1,\ldots,E_n;F)=\mathcal{L}(E_1,\ldots,E_n;F),$$ for every Banach space $F$.\end{prop}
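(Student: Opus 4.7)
The plan is to observe that two of the three inclusions are essentially free, and that the third reduces to a linearization argument via the universal property of the projective tensor product.

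First, I would note that $\mathcal{L}_{msc}(E_1,\ldots,E_n;F) \subseteq \mathcal{L}(E_1,\ldots,E_n;F)$ is trivial by definition, and $\mathcal{CC}\circ\mathcal{L}(E_1,\ldots,E_n;F) \subseteq \mathcal{L}_{msc}(E_1,\ldots,E_n;F)$ was already established in Theorem \ref{propmsc}. Thus the entire content of the proposition is the reverse inclusion $\mathcal{L}(E_1,\ldots,E_n;F) \subseteq \mathcal{CC}\circ\mathcal{L}(E_1,\ldots,E_n;F)$.

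Next, I would invoke the universal property of the projective tensor product (e.g.\ \cite[Theorem 2.9]{ryan}): given $A \in \mathcal{L}(E_1,\ldots,E_n;F)$, there is a unique continuous linear operator $\widetilde{A} \colon E_1\widehat{\otimes}_\pi\cdots\widehat{\otimes}_\pi E_n \longrightarrow F$ satisfying $\widetilde{A}(x_1\otimes \cdots \otimes x_n) = A(x_1,\ldots,x_n)$. Writing $\sigma \colon E_1 \times \cdots \times E_n \longrightarrow E_1\widehat{\otimes}_\pi\cdots\widehat{\otimes}_\pi E_n$ for the canonical $n$-linear map $\sigma(x_1,\ldots,x_n) = x_1 \otimes \cdots \otimes x_n$, which belongs to $\mathcal{L}(E_1,\ldots,E_n; E_1\widehat{\otimes}_\pi\cdots\widehat{\otimes}_\pi E_n)$, we have the factorization $A = \widetilde{A} \circ \sigma$.

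Finally, I would use the Schur property assumption. Since $E_1\widehat{\otimes}_\pi\cdots\widehat{\otimes}_\pi E_n$ has the Schur property, every weakly convergent sequence in it is norm convergent; composing with the continuous linear map $\widetilde{A}$ preserves norm convergence, so $\widetilde{A}$ is completely continuous, i.e.\ $\widetilde{A} \in \mathcal{CC}(E_1\widehat{\otimes}_\pi\cdots\widehat{\otimes}_\pi E_n; F)$. By the definition of the composition ideal, the factorization $A = \widetilde{A} \circ \sigma$ shows that $A \in \mathcal{CC}\circ\mathcal{L}(E_1,\ldots,E_n;F)$, completing the proof.

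There is no real obstacle here; the statement is essentially a formal consequence of linearization plus the definition of the Schur property. The only subtlety worth mentioning is conceptual: the proof emphasizes why the question about Schur property of projective tensor products (Open Question 1 in the paper) is exactly the obstruction to deciding when $\mathcal{CC}\circ\mathcal{L}$ saturates the full class of continuous multilinear operators.
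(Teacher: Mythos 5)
Your proof is correct and follows essentially the same route as the paper: the paper likewise reduces everything to the identity $\mathcal{CC}\circ\mathcal{L}(E_1,\ldots,E_n;F)=\mathcal{L}(E_1,\ldots,E_n;F)$ under the Schur hypothesis (citing the reasoning of \cite[Proposition 3.3]{bp}, which is precisely your linearization argument $A=\widetilde{A}\circ\sigma$) and then invokes the inclusion $\mathcal{CC}\circ\mathcal{L}\subseteq\mathcal{L}_{msc}$ from Theorem \ref{propmsc}. The only difference is that you spell out the linearization details that the paper delegates to the reference.
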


\begin{proof} As we have already mentioned, reasoning as in \cite[Proposition 3.3]{bp} we prove that ${\cal CC} \circ {\cal L}(E_1, \ldots, E_n;F) = {\cal L}(E_1, \ldots, E_n;F)$. From Theorem \ref{propmsc} we know that $\mathcal{CC}\circ\mathcal{L}\subset\mathcal{L}_{msc}$, what completes the proof.
\end{proof}

The only two cases we know that $E_1\widehat{\otimes}_\pi\cdots\widehat{\otimes}_\pi E_n$ has the Schur property are the following: (i) $\widehat{\otimes}_\pi^n \ell_1= \ell_1$ for every $n$; (ii) $BP \widehat{\otimes}_\pi BP$ has the Schur property, where $BP$ is the space constructed by Bourgain and Pisier \cite{bourgainpisier}. To the best of our knowledge, it is unknown whether $BP \widehat{\otimes}_\pi BP \widehat{\otimes}_\pi BP$ has the Schur property or not. Such few examples after so many years make us believe that the answer to Open Question 1 shall turn out to be negative. Although we cannot prove it, we believe that if $\mathcal{CC}\circ\mathcal{L}=\mathcal{L}_{msc}$, then the solution to Open Question 1 will be affirmative. That is why we pose the:

\medskip

\noindent{\bf Open Question 2.} We conjecture that $\mathcal{CC}\circ\mathcal{L}\subsetneqq\mathcal{L}_{msc}$.

\bigskip

\noindent{\bf Acknowledgement.} We thank Professor Mary Lilian Louren\c co for making our collaboration possible.

\vspace{2em}

\noindent Faculdade de Matem\'atica\\
Universidade Federal de Uberl\^andia\\
38.400-902 -- Uberl\^andia, Brazil\\
e-mails: botelho@ufu.br, ewerton@ime.usp.br.

\end{document}